\documentclass[12pt,reqno]{amsart}
\usepackage[left=3cm,top=2cm,right=3cm,bottom=2cm]{geometry}
\usepackage{amssymb}
\usepackage{amsbsy}
\usepackage{epsfig}
\usepackage{tikz}
\usepackage{verbatim}
\usepackage[normalem]{ulem}
\usepackage[pdftex]{hyperref}
\hypersetup{colorlinks=true,linkcolor=blue,citecolor=blue,breaklinks = true}

\tikzstyle{vertex} = [fill,shape=circle,node distance=80pt]
\tikzstyle{edge} = [fill,opacity=.5,fill opacity=.5,line cap=round, line join=round, line width=50pt]
\tikzstyle{elabel} =  [fill,shape=circle,node distance=30pt]

\pgfdeclarelayer{background}
\pgfsetlayers{background,main}

\begin{document}
	\title{The  spectrum of a class of uniform hypergraphs} 	
\author[K. Cardoso]{Kau\^e da Rosa Cardoso} \address{Instituto Federal de Educa\c c\~ao, Ci\^encia e Tecnologia do Rio Grande do Sul - Campus Feliz, CEP 95770-000, Feliz, RS, Brasil} \email{\tt
kaue.cardoso@feliz.ifrs.edu.br}
 	
\author[C. Hoppen]{Carlos Hoppen} \address{Instituto de Matem\'atica e
Estat\'{\i}stica, UFRGS,  CEP 91509--900, Porto Alegre, RS, Brazil}
\email{\tt choppen@ufrgs.br}
 	
\author[V.Trevisan]{Vilmar Trevisan} \address{Instituto de Matem\'atica e
Estat\'{\i}stica, UFRGS,  CEP 91509--900, Porto Alegre, RS, Brazil}
\email{\tt trevisan@mat.ufrgs.br} 

\date{Feb 26, 2019}		

\pdfpagewidth 8.5 in \pdfpageheight 11 in

\newcommand{\h}{\mathcal{H}}
\newcommand{\hk}{\mathcal{H}^k}
\newcommand{\A}{\mathbf{A}}
\newcommand{\x}{\mathbf{x}}
\newcommand{\y}{\mathbf{y}}
\newcommand{\Ah}{\mathbf{A}_\mathcal{H}}
\newcommand{\Ahk}{\mathbf{A}_{\mathcal{H}^k}}
\newcommand{\C}{\mathbb{C}}

	\newtheorem{Pro}{Proposition}
	\newtheorem{Def}{Definition}
	\newtheorem{Teo}{Theorem}
	\newtheorem{Exe}{Example}
    \newtheorem{Lem}[Teo]{Lemma}
    \newtheorem{Cor}[Teo]{Corollary}
    
    \newcommand{\keyword}[1]{\textsf{#1}}

\begin{abstract}
A generalized power hypergraph $\h^k_s$ is obtained from a base hypergraph $\h$ by means of some simple edge-expansion operations. Kang, Liu, Qi and Yuan~\cite{Kang} proved that the nonzero eigenvalues of $\h$ give rise to nonzero eigenvalues of $\h^k_s$. In this paper we show that all nonzero eigenvalues of $\h^k_s$ may be computed from the eigenvalues of its base hypergraph $\h$ and of its subgraphs.  To prove this, we derive spectral results about edge-expansion operations that may be interesting on their own sake.\newline

\noindent \textsc{Keywords.}  Hypergraph;  Generalized power hypergraph; Adjacency tensor; Spectral hypergraph theory.\newline

\noindent \textsc{AMS classification.} 05C65, 15A69, 05C50, 15A18.
\end{abstract}

\maketitle

\section{Introduction}

Spectral graph theory analyzes the structure of graphs through the spectrum
of matrices associated with them. This subject is widely studied and has
applications in many areas, such as computer science, chemistry and physics,
in addition to many areas of mathematics (see, for example,
\cite{Cve,Beh,Sak,Shu,Sta}). A spectral theory for the adjacency tensor (or
hypermatrix) associated with a hypergraph has been proposed by Cooper and
Dutle in~\cite{Cooper}. For more information about spectral parameters for
hypergraphs, see~\cite{HuQi, Niki, Pearson, QiBook, Shao}. Cooper and Dutle defined the eigenvalues and eigenvectors of
such a tensor in ways that generalise their graph counterparts (the precise
definitions are in Section~\ref{sec:pre}). Recently the study of hypergraph
spectra has attracted the attention of a large number of
researchers~\cite{Guo,Jin,Yue,Zhang}.

In this paper we are interested in studying the spectrum of a class of uniform hypergraphs that was first considered by Kang, Liu, Qi and Yuan~\cite{Kang}. As usual, a \emph{hypergraph} $\h=(V,E)$ is given by a vertex set $V$ and by a set $E=\{e \colon e \subseteq V\}$ whose elements are called (hyper)edges. A hypergraph is $k$-uniform (or a $k$-graph) if all of its edges have size $k$. For a graph $G$ and an integer $k \geq 2$, the \emph{power graph} $G^k$ is obtained from $G$ by adding $k-2$ new vertices to each edge of $G$. If $v(G)$ and $e(G)$ denote the number of vertices and the number of edges of $G$, respectively, this means that $v(G^k)=v(G)+(k-2)e(G)$ and $e(G^k)=e(G)$. In extremal combinatorics, the graph $G^k$ is often called an \emph{expanded graph} or an \emph{extended graph}, but we shall use the terminology previously used in spectral theory.

The study of the spectrum of $G^k$ began with Hu, Qi and Shao~\cite{Hu} in 2013. Following the work in~\cite{Hu}, there were many efforts to study the spectrum of this type of hypergraph and of generalisations thereof. The literature is mostly devoted to the spectral radius of such hypergraphs, rather than their entire spectrum  (see, for example, \cite{Zhou}, \cite{Khan2}, \cite{Yuan}, \cite{Khan}, \cite{Kang}).

Regarding power graphs, Zhou, Sun, Wang and Bu~\cite{Zhou} proved the following result.
\begin{Teo}\cite[Theorem 16]{Zhou}\label{TeoZhou}
If $\lambda \neq 0$ is an eigenvalue of a graph $G$, then the complex solutions of the equation
$x^k=\lambda^{2}$ are eigenvalues of the power hypergraph $G^k$.
\end{Teo}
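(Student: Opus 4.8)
\emph{Overview.} The plan is to produce an explicit eigenvector of $G^k$ for one particular solution $x_0$ of $x^k=\lambda^2$, and then to obtain the remaining $k-1$ solutions from a symmetry of the spectrum of power hypergraphs. We may assume $k\ge 3$. For an edge $e=\{u,v\}$ of $G$, write $w_{e,1},\ldots,w_{e,k-2}$ for the vertices added to $e$, so that the corresponding edge of $G^k$ is $\{u,v,w_{e,1},\ldots,w_{e,k-2}\}$; recall that $x$ is an eigenvalue of $G^k$ with eigenvector $\mathbf{z}$ if and only if $\sum_{e\ni i}\prod_{j\in e\setminus\{i\}}z_j = x\,z_i^{k-1}$ for every vertex $i$, the outer sum ranging over the edges containing $i$.

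\emph{Step 1: an explicit eigenvector.} Fix a nonzero $\y=(y_u)_{u\in V(G)}$ with $\sum_{v\sim u}y_v=\lambda y_u$ for all $u$. Since $\lambda\ne 0$, fix a $k$-th root $\sigma$ of $\lambda$ (so $\sigma\ne 0$), and for each $u$ fix a $k$-th root $\theta_u$ of $y_u$. Define $\mathbf{z}$ on $V(G^k)$ by $z_u=\theta_u^2$ for $u\in V(G)$ and $z_{w_{e,l}}=\theta_u\theta_v/\sigma$ for $1\le l\le k-2$ and $e=\{u,v\}\in E(G)$. I claim $\mathbf{z}$ is an eigenvector of $G^k$ for the value $x_0:=\sigma^2$, which indeed satisfies $x_0^{k}=(\sigma^k)^2=\lambda^2$. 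Modulo the coordinates with $y_u=0$ — where $z_u=0$ and every $z_{w_{e,l}}$ on an edge at $u$ vanishes, so both sides of the relevant equations are $0$ — the verification is as follows. At an added vertex $w_{e,m}$ the equation $z_uz_v\prod_{l\ne m}z_{w_{e,l}}=x_0 z_{w_{e,m}}^{k-1}$ reduces to $z_uz_v=x_0 z_{w_{e,m}}^{2}$, i.e. to $\theta_u^2\theta_v^2=\sigma^2(\theta_u\theta_v/\sigma)^2$, an identity. At an original vertex $u$ the left-hand side is
\[
\sum_{v\sim u}z_v\prod_{l=1}^{k-2}z_{w_{e,l}}=\sigma^{-(k-2)}\theta_u^{k-2}\sum_{v\sim u}\theta_v^{k}=\sigma^{-(k-2)}\theta_u^{k-2}\sum_{v\sim u}y_v=\lambda\sigma^{-(k-2)}\theta_u^{2k-2},
\]
which equals $\sigma^2\theta_u^{2k-2}=x_0 z_u^{k-1}$ precisely because $\sigma^k=\lambda$. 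Since $\y\ne 0$, some $\theta_u\ne 0$, so $\mathbf{z}\ne 0$.

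\emph{Step 2: the root-of-unity symmetry.} Put $\zeta=e^{2\pi i/k}$. I claim that whenever $x_0$ is an eigenvalue of $G^k$, so is $\zeta^c x_0$ for every integer $c$. Since $k\ge 3$, the vertex $w_{e,1}$ is added to each edge $e$, lies in no other edge, and is the unique such ``distinguished'' vertex of its edge. From an $x_0$-eigenvector $\mathbf{z}$, define $\mathbf{z}'$ by $z'_{w_{e,1}}=\zeta^c z_{w_{e,1}}$ for all $e\in E(G)$ and $z'_v=z_v$ otherwise. If $i$ is an original vertex or a non-distinguished added vertex, then every edge $e\ni i$ has its distinguished vertex inside $e\setminus\{i\}$, so each product $\prod_{j\in e\setminus\{i\}}z'_j$ acquires exactly one factor $\zeta^c$; thus the left-hand side of the equation at $i$ is multiplied by $\zeta^c$, matching $\zeta^c x_0 (z'_i)^{k-1}=\zeta^c x_0 z_i^{k-1}$. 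If $i=w_{e,1}$ is distinguished, the left-hand side is unchanged, while the right-hand side becomes $\zeta^c x_0(\zeta^c z_{w_{e,1}})^{k-1}=\zeta^{ck}x_0 z_{w_{e,1}}^{k-1}=x_0 z_{w_{e,1}}^{k-1}$, again a match. Hence $\mathbf{z}'$ is an eigenvector of $G^k$ for $\zeta^c x_0$.

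\emph{Conclusion and main difficulty.} The $k$ complex solutions of $x^k=\lambda^2$ are exactly $\{\zeta^c x_0 : 0\le c\le k-1\}$, because $x_0^{k}=\lambda^2$ and any two solutions differ by a $k$-th root of unity; by Steps 1 and 2, each of them is an eigenvalue of $G^k$. I expect the only real difficulty to be the bookkeeping in Step 1, namely choosing the exponents in $\mathbf{z}$ so that the equations at the original vertices collapse \emph{exactly} onto $\sum_{v\sim u}y_v=\lambda y_u$ — this is what forces $x_0=\sigma^2$ with $\sigma^k=\lambda$ rather than some other normalisation — together with the routine but necessary inspection of the coordinates with $y_u=0$. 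For odd $k$ one can in fact dispense with Step 2, letting $\sigma$ range over all $k$-th roots of $\lambda$ so that $\sigma^2$ already runs through every solution of $x^k=\lambda^2$; for even $k$ this reaches only half of them, so the symmetry argument there is essential.
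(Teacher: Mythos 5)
Your proof is correct. Note that the paper does not actually prove Theorem~\ref{TeoZhou}: it is quoted from Zhou, Sun, Wang and Bu without proof, so there is no internal argument to compare against. Your construction is the standard one and is consistent with the machinery the paper does develop: your assignment $z_u=\theta_u^2$ with $\theta_u^k=y_u$ (i.e.\ $z_u^k=y_u^2$) is exactly the coordinate relation $y_v^{r}=x_v^{k}$ of Lemma~\ref{Lema_h_hks} run in the converse direction for $r=2$, $s=1$, and your Step 2 is the usual root-of-unity symmetry of the spectrum of a $k$-graph containing a degree-one vertex in every edge. All the verifications check out, including the degenerate coordinates with $y_u=0$ and the nonvanishing of $\mathbf{z}$. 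One small point worth making explicit: the hypothesis $k\ge 3$ is not merely a convenience you ``may assume'' --- the statement is genuinely false for $k=2$ (then $G^2=G$ and $-\lambda$ need not be an eigenvalue of $G$, e.g.\ for $G=K_3$ and $\lambda=2$); the original \cite[Theorem 16]{Zhou} indeed carries the hypothesis $k\ge 3$, which is silently dropped in the restatement above. Your closing remark that for odd $k$ the values $\sigma^2$ already exhaust all solutions of $x^k=\lambda^2$ as $\sigma$ ranges over the $k$-th roots of $\lambda$, whereas for even $k$ they cover only half of them, is accurate and explains why Step 2 cannot be omitted in general.
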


It is natural to ask whether the entire spectrum of $G^k$ consists of eigenvalues of this type and, if this is not the case, if the remaining eigenvalues are related to $G$ in some way. The answer to our first question is no, which is illustrated by the power hypergraph $\h=(C_4)^3$: $\sqrt[3]{2}$ is one of its eigenvalues, even though the nonzero eigenvalues of the cycle $C_4$ are only $2$ and $-2$. Of course, the solutions of the equation $x^3=4$ are also nonzero eigenvalues of $\h$.

In this paper, we address the second question and prove that the entire spectrum of $G^k$ may be obtained from $G$ and its subgraphs. In fact, we do this for a more general class of hypergraphs.
				
Let $\h=(V,E)$ be an $r$-graph. Generalizing the notion defined above, given $k \geq r$, we define the \emph{$k$-expansion} $\h^k$ of $\h$ as the $k$-graph obtained by adding $k-r$ new vertices to each edge of $\h$.  Moreover, for an integer $s \geq 1$, we define the \emph{$s$-extension} $\h_s$ of $\h$ as the $sr$-graph obtained by replacing each vertex $v\in V$ by a set $S_v$ of cardinality $s$. Each edge $\{v_1,\ldots, v_r\}$
in $\h$ gives rise to the edge $S_{v_1}\cup\cdots\cup S_{v_r}$ of $\h_s$. Given integers $r \geq 2$, $s\geq 1$ and $k \geq rs$,  a \emph{generalized power hypergraph} is the $k$-graph $\h^k_s =(\h_s)^k$ obtained from an $r$-graph $\h$. For more formal versions of these definitions, we refer the reader to Section~\ref{sec:pre}.

Kang, Liu, Qi and Yuan~\cite{Kang} have studied the spectrum of generalized power hypergraphs.
Even though their original result is stated in a more specific setting, its proof immediately implies the following generalization of Theorem~\ref{TeoZhou}.
\begin{Teo}\cite[Theorem 14]{Kang}\label{TeoKang}
Let $\lambda\neq0$ be an eigenvalue of an $r$-graph $\h$ and consider integers $s\geq 1$ and $k \geq rs$.  The complex solutions of the equation $x^k=\lambda^{rs}$ are eigenvalues of the generalized power hypergraph $\h^k_s$.	
\end{Teo}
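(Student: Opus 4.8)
The plan is to prove the statement by exhibiting, for each complex number $\nu$ with $\nu^k=\lambda^{rs}$, an explicit eigenvector of $\h^k_s$ associated with $\nu$, built from a fixed eigenvector $\y=(y_v)_{v\in V}$ of $\h$ associated with $\lambda$. Recall that the vertex set of $\h^k_s=(\h_s)^k$ consists of the copy sets $S_v$ (one per $v\in V$, each of size $s$) together with, for every edge $e$ of $\h$, a set $W_e$ of $k-rs$ new vertices; that the edge of $\h^k_s$ arising from $e=\{v_1,\dots,v_r\}$ is $\tilde e=S_{v_1}\cup\dots\cup S_{v_r}\cup W_e$; and that the edges of $\h^k_s$ are thereby in bijection with those of $\h$. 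I assume $k>rs$, so that each $W_e$ is nonempty (when $k=rs$ no vertices are added and $\h^k_s=\h_s$). Writing the tensor eigenvalue equations as $\sum_{f\ni i}\prod_{j\in f\setminus\{i\}}x_j=\nu x_i^{k-1}$ for each vertex $i$, the sum being over the edges $f$ of $\h^k_s$ containing $i$, the aim is to choose the entries of $\x$ so that each of these collapses to the eigenvalue equation of $\y$ for $\h$ together with the relation $\nu^k=\lambda^{rs}$.

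The construction is as follows. Fix for each $v$ a complex number $b_v$ with $b_v^k=y_v^r$ (so $b_v=0$ exactly when $y_v=0$) and set $x_u=b_v$ for every $u\in S_v$. For each edge $e$ of $\h$ put $\tau_e=\lambda^{-1}\prod_{v\in e}y_v$; this is the only place where $\lambda\ne 0$ is used. If $\tau_e\ne 0$, then the number $\pi_e$ determined by $\pi_e\prod_{v\in e}b_v^s=\nu\tau_e$ satisfies $\pi_e^k=\tau_e^{k-rs}$ (a one-line computation from $\nu^k=\lambda^{rs}$ and $b_v^k=y_v^r$), so $\pi_e$ can be written as a product of $k-rs$ complex numbers each equal to a $k$-th root of $\tau_e$: take a fixed $k$-th root of $\tau_e$ at every vertex of $W_e$ and correct one of them by a suitable $k$-th root of unity. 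Assign these numbers as the values $x_w$, $w\in W_e$; then $x_w^k=\tau_e$ for every $w\in W_e$, and $\prod_{u\in\tilde e}x_u=(\prod_{v\in e}b_v^s)\pi_e=\nu\tau_e$. If $\tau_e=0$, set $x_w=0$ for every $w\in W_e$; then $\prod_{u\in\tilde e}x_u=0=\nu\tau_e$ still holds, since the product contains a vanishing factor $b_v$.

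To verify the eigenvalue equations, first take $w\in W_e$ with $\tau_e\ne 0$: as $w$ lies only in $\tilde e$, the left-hand side equals $\prod_{u\in\tilde e\setminus\{w\}}x_u=(\prod_{u\in\tilde e}x_u)x_w^{-1}=\nu\tau_e x_w^{-1}=\nu x_w^{k-1}$, using $x_w^k=\tau_e$. Next take $u\in S_v$ with $y_v\ne 0$: here $u$ lies in $\tilde e$ exactly for the edges $e\ni v$, so the left-hand side equals $\sum_{e\ni v}(\prod_{u'\in\tilde e}x_{u'})b_v^{-1}=\frac{\nu}{b_v}\sum_{e\ni v}\tau_e=\frac{\nu}{\lambda b_v}\sum_{e\ni v}\prod_{v'\in e}y_{v'}=\frac{\nu}{\lambda b_v}\lambda y_v^r=\nu b_v^{k-1}$, where the third equality is the eigenvalue equation of $\y$ at $v$ multiplied through by $y_v$ and the last uses $y_v^r=b_v^k$. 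Finally, at each vertex where $\y$ vanishes, and at each vertex of a set $W_e$ with $\tau_e=0$, the assigned value is $0$; there the left-hand side is $0$ (each product involves a vanishing factor, using $s\ge 1$ and $W_e\ne\emptyset$) and the right-hand side is $\nu\cdot 0^{k-1}=0$ because $k\ge 2$. Thus $\x$ is a nonzero eigenvector of $\h^k_s$ for $\nu$, and letting $\nu$ run over the $k$ solutions of $x^k=\lambda^{rs}$ completes the proof.

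The step I expect to require the most care is the assignment of values on the sets $W_e$. One might worry about a compatibility obstruction in realizing the prescribed products $\pi_e$; indeed, if one insists that the entries be equal within each $W_e$, a genuine obstruction arises as soon as $\gcd(rs,k)>1$, already for $\h=C_3$. What makes the argument go through is that the entries within a single $W_e$ may be allowed to differ by $k$-th roots of unity, and, the sets $W_e$ being pairwise disjoint, no condition then links distinct edges, so every $\pi_e$ with $\pi_e^k=\tau_e^{k-rs}$ is attainable. Conceptually the construction factors through the $s$-extension $\h\mapsto\h_s$ — on which $\lambda$ itself is still an eigenvalue, with eigenvector constant on each $S_v$ — followed by the $k$-expansion $\h_s\mapsto(\h_s)^k$, which is the step that brings in the $k$-th roots and the relation $\nu^k=\lambda^{rs}$.
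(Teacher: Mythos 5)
Your construction for the case $k>rs$ is correct. The key identity $\pi_e^k=\tau_e^{k-rs}$ does follow from $\nu^k=\lambda^{rs}$, $b_v^k=y_v^r$ and $\prod_{v\in e}y_v=\lambda\tau_e$, and the three families of eigenvalue equations (at the vertices of $W_e$, at the copies of a vertex $v$ with $y_v\neq0$, and at the vertices carrying the value $0$) all close up as you claim; placing the correcting root of unity $\omega$ on a single vertex of $W_e$ is exactly the right way to avoid the consistency obstruction you mention. Note that the paper does not prove this statement — it is quoted from Kang, Liu, Qi and Yuan — so there is no internal proof to compare against; your argument is a clean self-contained one, and your closing remark that it factors through the extension step followed by the expansion step matches how the surrounding results are organized.

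The genuine gap is the case $k=rs$, which the statement includes ($k\geq rs$) and which your parenthetical ``when $k=rs$ no vertices are added'' does not dispose of: the theorem still asserts there that every $\nu$ with $\nu^{rs}=\lambda^{rs}$ is an eigenvalue of $\h_s$. Your construction genuinely needs $W_e\neq\emptyset$, since without it the product over an edge is forced to be $\prod_{v\in e}b_v^s$, which equals $\nu\tau_e$ only up to an uncontrolled $k$-th root of unity. Moreover, this is not a repairable defect: the statement is false for $k=rs$. For $s=1$ and $k=r=2$ one has $\h^2_1=\h$, so with $\h=K_3$ and $\lambda=2$ the theorem would assert that $-2$ is an eigenvalue of $K_3$, which it is not. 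The failure is not an artifact of this degenerate case: for $\h=K_3$, $r=s=2$, $k=4$, $\lambda=2$, one can check that $2i$ is not an eigenvalue of $(K_3)^4_2=(K_3)_2$. (Sketch: for a strictly nonzero eigenpair $(\nu,\x)$ set $P=x_1x_{1'}$, $Q=x_2x_{2'}$, $R=x_3x_{3'}$; the equations at the two copies of each vertex force $x_v^4=x_{v'}^4=\pm P^2$, hence $Q+R=\pm\nu P$, $P+R=\pm\nu Q$, $P+Q=\pm\nu R$, and running through the eight sign patterns yields only $\nu\in\{\pm1,\pm2,(\pm1\pm i\sqrt{7})/2\}$; eigenvectors with zero entries reduce to a single $4$-edge, whose nonzero eigenvalues are the fourth roots of unity.) So your restriction to $k>rs$ is in fact necessary, but you should state explicitly that the remaining case is excluded and why, rather than presenting it as vacuous — especially since the paper itself invokes the $k=rs$ instance of this theorem in the proof of Lemma~\ref{LemaSubgraph_hs}.
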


The main result in this paper is the full characterization of the spectrum of a generalized power hypergraph.
\begin{Teo}\label{Main}
Let $\h$ be an $r$-graph, fix integers $s \geq 1$ and $k \geq rs$ and consider a constant $\lambda \in \mathbb{C}$ such that $\lambda \neq 0$.
\begin{itemize}
	
\item[(a)] Assume that $k = rs+1$ or that $k=rs$ and $s \geq 2$. The number $\lambda$ is an eigenvalue of $\h^k_s$ if and only if some induced subgraph $\mathcal{G}$ of $\h$ has an eigenvalue $\beta$ such that $\beta^{rs}=\lambda^k$.		
	
\item[(b)] For $k > rs+1$, the number $\lambda$ is an eigenvalue of $\h^k_s$ if and only if some subgraph $\mathcal{G}$ of $\h$ has an eigenvalue $\beta$ such that $\beta^{rs}=\lambda^k$.

\end{itemize}
\end{Teo}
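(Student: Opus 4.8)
The plan is to prove Theorem~\ref{Main} by reducing it to spectral results about the two building blocks of the generalized power hypergraph: the $s$-extension operation $\h \mapsto \h_s$ and the $k$-expansion operation $\mathcal{K} \mapsto \mathcal{K}^k$. Since $\h^k_s = (\h_s)^k$, I would first establish, for an arbitrary $r'$-graph $\mathcal{K}$ and $k \geq r'$, a clean description of the nonzero spectrum of the expansion $\mathcal{K}^k$: I expect that $\lambda \neq 0$ is an eigenvalue of $\mathcal{K}^k$ precisely when some suitable substructure of $\mathcal{K}$ has an eigenvalue $\mu$ with $\mu$ related to $\lambda$ by a power equation (morally $\mu^{r'} = \lambda^k$, matching the exponents in Theorems~\ref{TeoZhou} and~\ref{TeoKang}). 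The direction ``eigenvalue of substructure $\Rightarrow$ eigenvalue of $\mathcal{K}^k$'' is essentially the content of Theorem~\ref{TeoKang} (and its proof technique), so the real work is the converse.

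For the converse, I would take an eigenvector $\x$ of $\Ahk[\mathcal{K}^k]$ for the nonzero eigenvalue $\lambda$ and analyze the eigenvalue equations at the $k-r'$ ``new'' vertices added inside a fixed edge $e$ versus the ``old'' vertices of $\mathcal{K}$. At a new vertex $w$ of degree one, the eigenvalue equation forces $\lambda x_w^{k-1}$ to equal a single monomial in the other coordinates on $e$; comparing the equations at the various new vertices of the same edge shows that all the new-vertex coordinates on a given edge are equal up to roots of unity, and lets one eliminate them, collapsing the system to a polynomial eigenvalue-type equation on the coordinates indexed by $V(\mathcal{K})$. Carrying this elimination out carefully yields an equation of the form $\A_{\mathcal{G}} \y = \mu \y^{[r'-1]}$ for the sub-hypergraph $\mathcal{G}$ of $\mathcal{K}$ induced (or spanned) by the support of $\y$, with $\mu^{r'} = \lambda^k$. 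The subtle point — and the reason (a) and (b) differ — is what kind of subgraph $\mathcal{G}$ one obtains: when $k > r'+1$ there is enough freedom in choosing the new-vertex coordinates (some can be set to zero) that one may realize any \emph{spanning} subgraph on a given support, whereas when $k = r'+1$ (a single new vertex per edge) or $k = r'$ (no new vertices, i.e. the pure extension, which needs $s \geq 2$ to have a new vertex inside each ``thickened'' vertex-class) the support is forced to be closed under edges, giving an \emph{induced} subgraph.

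The step I expect to be the main obstacle is handling the $s$-extension layer, i.e., passing from ``eigenvalue of a subgraph of $\h_s$'' back to ``eigenvalue $\beta$ of a subgraph of $\h$ with $\beta^{rs} = \lambda^k$''. Here one must show that a nonzero eigenvalue of any relevant subgraph of $\h_s$ comes from the corresponding subgraph of $\h$ via the transformation $\beta \mapsto \beta$ with the exponent $r \mapsto rs$; this amounts to a lemma relating $\mathrm{Spec}(\Ah[\h_s])$ to $\mathrm{Spec}(\Ah[\h])$. I would prove such a lemma by the same averaging/symmetrization idea: an eigenvector of $\h_s$ can be symmetrized within each class $S_v$ (the eigenvalue equations at the $s$ vertices of $S_v$ differ only by a monomial rescaling), so one may assume it is constant up to roots of unity on each $S_v$, which then directly produces an eigenvector of $\h$. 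Combining the expansion lemma (with $r' = rs$, $\mathcal{K} = \h_s$) and the extension lemma gives $\beta^{rs} = \mu = \lambda^{k/1}$... more precisely $\mu^{rs} = \lambda^k$ and $\beta = \mu$, hence $\beta^{rs} = \lambda^k$, and tracking whether the subgraphs are induced or spanning through both layers yields exactly the dichotomy in (a) versus (b).

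Finally, I would assemble the pieces: one direction of the ``if and only if'' is immediate from Theorem~\ref{TeoKang} applied to the subgraph $\mathcal{G}$ (noting $\mathcal{G}^k_s$ is a sub-hypergraph of $\h^k_s$ and that eigenvalues of sub-hypergraphs need not in general lift, so this direction actually also requires a short argument extending a sub-hypergraph eigenvector by zero and checking the eigenequation survives — which it does because the new coordinates vanish outside $\mathcal{G}$); the other direction is the elimination analysis above. The case bookkeeping ($k = rs$ with $s \geq 2$, $k = rs+1$, $k > rs+1$) should be isolated into the expansion lemma so that the proof of Theorem~\ref{Main} itself is a short deduction.
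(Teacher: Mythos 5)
Your plan matches the paper's proof in all essentials: the paper likewise splits $\h^k_s=(\h_s)^k$ into the extension and expansion layers, eliminates the degree-one new-vertex coordinates via their eigenvalue equations (Lemmas~\ref{LemaTwoVertices}, \ref{LemaAdditionalVertex} and \ref{Lema_h_hks}), derives the induced-versus-general subgraph dichotomy from whether each edge carries one or at least two new vertices (Lemmas~\ref{LemaSubgraph_hs}--\ref{LemaSubgraph_hk}), and handles the converse by Theorem~\ref{TeoKang} together with the zero-extension lifting argument you flag as necessary (Theorem~\ref{thm_subgraphs} and its corollaries). The proposal is correct and takes essentially the same route.
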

In other words, to find the entire (non-zero) spectrum of $\h^k_s$, it suffices to compute eigenvalues of subgraphs of $\h$ (actually, of induced subgraphs if $k \leq rs+1$). An immediate consequence of this fact is that, whenever $\lambda$ is an eigenvalue of $\h^k_s$ and $\varepsilon$ is a $k$th root of unity, the product $\varepsilon \lambda$ is also an eigenvalue of $\h^k_s$. We should mention that Theorem~\ref{Main} might be stated in an alternative way, whose proof is cleaner. To this end, we say that an eigenpair $(\lambda,\x)$ is \emph{strictly nonzero} if the eigenvalue $\lambda$ is nonzero and all the entries of the eigenvector $\x$ are nonzero. It turns out that, to compute the eigenvalues of $\h^k_s$, it suffices to look for all strictly nonzero eigenpairs of subgraphs (or induced subgraphs) of $\h$, and not for all eigenvalues of all (induced) subgraphs. However, we believe that stating the result with a condition on the components of the eigenvectors would make it harder to apply in some situations.

To prove Theorem~\ref{Main}, we shall proceed as follows. First we observe that an eigenvalue of a generalized power hypergraph whose eigenvector has zero entries forms a nonzero eigenpair of the induced subgraph obtained by removing all vertices associated with the zero entries (see Lemma~\ref{LemEigenpair_hk}). Then we relate strictly nonzero eigenpairs of a generalized power hypergraph with eigenvalues of its base hypergraph (see  Lemma~\ref{Lema_h_hks}).

In our proofs, in addition to edge-expansion operations, we will use vertex-removal and edge-removal operations. If $\h$ is an $r$-graph and $v \in V(\h)$, we let $\h \triangleleft v$ be the hypergraph obtained by removing $v$ and any vertices that become isolated when the edges containing $v$ are deleted. For $e \in E(\h)$, $\h-e$ is the hypergraph obtained by removing $e$ from $E(\h)$ along with any vertices that become isolated when $e$ is removed. The following is a result by Zhou, Sun, Wang and Bu~\cite{Zhou}.
\begin{Teo}\cite[
Theorem 13]{Zhou}\label{TeoEdgeRemocao}
If an $r$-graph $\h$ has an edge $e\in E(\h)$ which contains at least two
vertices of degree one, then the eigenvalues of $\h-e$ are also eigenvalues of
$\h$.
\end{Teo}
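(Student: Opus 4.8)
\textbf{Proof proposal for Theorem~\ref{TeoEdgeRemocao}.}

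The plan is to show that every eigenpair $(\mu,\y)$ of $\h-e$ extends, by padding $\y$ with zeros, to an eigenpair $(\mu,\x)$ of $\h$; this immediately gives the conclusion. Write $e=\{v_1,\ldots,v_r\}$ and let $D\subseteq e$ be the set of vertices of $e$ that have degree one in $\h$, so $|D|\ge 2$ by hypothesis. Deleting $e$ isolates exactly the vertices of $D$: a vertex of $e$ of degree at least two keeps an edge, and vertices outside $e$ are untouched, so $V(\h-e)=V(\h)\setminus D$. Define $\x$ on $V(\h)$ by $x_v=y_v$ for $v\in V(\h-e)$ and $x_v=0$ for $v\in D$. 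Since $\y\ne 0$, also $\x\ne 0$. (If $V(\h-e)=\varnothing$, then $\h$ is the single edge $e$, for which $\h-e$ has no eigenpairs and the statement is vacuously true.)

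Next I would verify the eigenequation at each vertex $v$ of $\h$, that is, that the sum over all edges $f$ containing $v$ of $\prod_{w\in f\setminus v}x_w$ equals $\mu\,x_v^{k-1}$, by splitting into three cases. (i) If $v\notin e$, the edges of $\h$ containing $v$ are precisely those of $\h-e$ containing $v$ (only $e$ was removed), and all of their vertices lie in $V(\h-e)$, so the equation at $v$ is literally the $(\mu,\y)$-equation of $\h-e$ and holds. (ii) If $v\in e$ and $\deg_\h(v)\ge 2$, then $v\in V(\h-e)$ and the edges of $\h$ containing $v$ are those of $\h-e$ containing $v$ together with $e$; the former contribute $\mu\,x_v^{k-1}$ exactly as in case (i), while the term coming from $e$ is $\prod_{w\in e\setminus v}x_w=0$ because $D\subseteq e\setminus v$ and the vertices of $D$ carry value $0$. (iii) If $v\in D$, then $e$ is the only edge containing $v$, so the equation reads $\prod_{w\in e\setminus v}x_w=\mu\cdot 0^{k-1}=0$, and the left-hand side vanishes because $|D|\ge 2$ forces $e\setminus v$ to still contain a vertex of $D$. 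In every case the identity holds, so $(\mu,\x)$ is an eigenpair of $\h$ and $\mu$ is an eigenvalue of $\h$.

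The step that needs the most care — and the only one that genuinely uses the full hypothesis — is case (iii). With a single degree-one vertex $v_1\in e$, the product $\prod_{w\in e\setminus v_1}x_w$ would equal $\prod_{w\in e\setminus v_1}y_w$, which is generally nonzero, so the zero-padding would fail at $v_1$; the existence of a second degree-one vertex is exactly what makes this product collapse to $0$. The rest is the routine bookkeeping of which vertices of $e$ persist in $\h-e$, recorded above by the identity $V(\h-e)=V(\h)\setminus D$, so no analytic or limiting argument is involved — only the algebra of the eigenvalue equations together with this observation.
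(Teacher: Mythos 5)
Your proof is correct, and it uses exactly the zero-padding technique that the paper employs: the statement itself is only cited from Zhou et al., but the paper's proof of its generalization, Theorem~\ref{thm_subgraphs}, extends an eigenvector of the subgraph by zeros and verifies the eigenvalue equations vertex by vertex, with the second degree-one vertex killing the contribution of the deleted edge, just as in your case (iii). Your argument is essentially the specialization of that proof (via Lemma~\ref{LemaPowerRemocao}(a), $\h-e=\h\triangleleft v$ for a degree-one vertex $v\in e$), so no further comparison is needed.
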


To obtain our results, we shall prove the following generalizations of this result.
\begin{Teo}\label{thm_subgraphs}
Let $\h$ be an $r$-graph.

\begin{itemize}
\item[(a)] If $\h$ contains distinct vertices $u$ and $v$ that are contained in exactly the same edges, then the eigenvalues of
$\h\triangleleft v$ are also eigenvalues of $\h$.

\item[(b)] If $\h$ contains a vertex $v$ such that every edge containing $v$ has a vertex with degree 1 other than $v$, then the eigenvalues of $\h\triangleleft v$ are also eigenvalues of $\h$.

\end{itemize}
\end{Teo}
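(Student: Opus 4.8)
Both items will follow from the same construction, so the plan is to prove them in parallel. Recall that, for an $r$-graph $\mathcal F$, a pair $(\lambda,\x)$ with $\x\neq\mathbf{0}$ is an eigenpair precisely when, for every vertex $i$ of $\mathcal F$,
$$\sum_{\substack{e\in E(\mathcal F)\\ i\in e}}\ \prod_{j\in e\setminus\{i\}} x_j \;=\; \lambda\, x_i^{\,r-1}.$$
Given an eigenpair $(\lambda,\x)$ of $\h\triangleleft v$, I would extend $\x$ to the vector $\y$ indexed by $V(\h)$ with $y_i=x_i$ for $i\in V(\h\triangleleft v)$ and $y_i=0$ for every vertex of $\h$ that does not survive in $\h\triangleleft v$ (in particular $y_v=0$), and check that $(\lambda,\y)$ is an eigenpair of $\h$. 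Since $\x\neq\mathbf{0}$ we have $\y\neq\mathbf{0}$, so only the eigenvalue equation of $\h$ has to be verified, vertex by vertex.

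For a vertex $i\in V(\h\triangleleft v)$ I would split the edges $e\ni i$ of $\h$ into those containing $v$ and those not. If $v\in e$ then $v\in e\setminus\{i\}$ (because $i\neq v$), so the factor $y_v=0$ makes $\prod_{j\in e\setminus\{i\}} y_j$ vanish. The edges $e\ni i$ with $v\notin e$ are exactly the edges of $\h\triangleleft v$ containing $i$, and all of their vertices lie in $V(\h\triangleleft v)$ — a vertex other than $v$ is deleted only when every edge through it contains $v$ — so on these edges $\y$ and $\x$ coincide. Hence the left-hand side at $i$ equals $(\A_{\h\triangleleft v}\x^{r-1})_i=\lambda x_i^{r-1}=\lambda y_i^{r-1}$, as wanted.

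It remains to treat $v$ and the deleted vertices $w\neq v$; for all of these $y$ is $0$, so the right-hand side $\lambda y_i^{r-1}$ is $0$ (here $r\ge 2$), and I must check the left-hand side vanishes as well. For a deleted vertex $w\neq v$ this is immediate: $w$ was deleted because every edge through it contains $v$, so each such edge contributes the factor $y_v=0$. The real content of the theorem is the equation at $v$, where I need every edge $e\ni v$ to carry a factor $y_j=0$ with $j\neq v$, i.e.\ every edge through $v$ must contain a deleted vertex distinct from $v$. This is precisely what each hypothesis provides: in (b) an edge $e\ni v$ contains a degree-$1$ vertex $w_e\neq v$, whose sole edge $e$ disappears once the edges through $v$ are removed, so $w_e$ becomes isolated and is deleted; in (a) an edge $e\ni v$ contains $u$, and since $u$ lies in exactly the edges that contain $v$, deleting those edges isolates $u$, so $u$ is deleted. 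In either case some vertex of $e\setminus\{v\}$ has $y$-value $0$, the sum at $v$ vanishes, and $(\lambda,\y)$ is an eigenpair of $\h$.

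The only step I expect to require care is this last one: one has to describe exactly which vertices are removed by the operation $\triangleleft v$ and confirm that the hypotheses of (a) and (b) force each edge at $v$ to lose at least one of them. Everything else is routine manipulation of the tensor eigenvalue equation.
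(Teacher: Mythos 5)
Your proposal is correct and follows essentially the same route as the paper: extend the eigenvector of $\h\triangleleft v$ by zeros, and verify the eigenvalue equation separately at surviving vertices, at deleted vertices other than $v$, and at $v$ itself, where the hypothesis of (a) (the twin $u$ is deleted) or of (b) (the degree-one vertex $w_e$ is deleted) supplies the vanishing factor. The only cosmetic difference is that you handle both parts uniformly, whereas the paper reduces the degree-one case of (b) to (a) before running the same computation.
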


The paper is organized as follows. In Section \ref{sec:pre} we introduce the relevant terminology and prove results about the effect of edge-expansion and extension, and of vertex and edge deletion, on the spectrum of a hypergraph. In particular, we prove Theorem~\ref{thm_subgraphs}. The proof of Theorem~\ref{Main} is the subject of Section \ref{sec:main}.

\section{Preliminaries}\label{sec:pre}

In this section, we shall present some basic definitions about hypergraphs and
tensors, as well as terminology, notation and concepts that will be useful in our proofs. More details can be found in \cite{Cooper}, \cite{Hu} and \cite{Shao}.
\begin{Def}
A tensor (or hypermatrix) $\A$ of dimension $n$ and order $r$ is a collection of $n^r$
elements $a_{i_1 \dots i_r}\in\mathbb{C}$ where
$i_1,\dots,i_r\in[n] =\{1,2,\ldots,n\}$.
\end{Def}

Let $\A$ be a tensor of dimension $n$ and order $r$ and  $\mathbf{B}$ be a
tensor of dimension $n$ and order $s$. We define the product of  $\A$ by
$\mathbf{B}$ as a tensor $\mathbf{C}=\A\mathbf{B}$ of dimension $n$ and
order $(r-1)(s-1)+1$, where	
\[c_{j\alpha_1\cdots\alpha_{r-1}}=\sum_{i_2,\dots, i_r = 1}^{n}a_{ji_2\dots i_r}b_{i_2\alpha_1}\cdots b_{i_r\alpha_{r-1}}\;\; \emph{with} \;\; j\in[n]\;\; \emph{and} \;\;\alpha_1,\dots,\alpha_{r-1}\in[n]^{s-1}.\]
	
In particular, if $\x\in \mathbb{C}^n$ is a vector, then the $i$th component of $\A\x$ is given by
\[(\A \x)_i=\sum_{i_2,\dots, i_r = 1}^{n}a_{ii_2\dots i_r}x_{i_2}\cdots x_{i_r}\quad\forall i\in [n].\]

If $\x=(x_1,\cdots,x_n)\in \mathbb{C}^n$ is a vector and $r$ is a positive integer, we let $\x^{[r]}$ denote the vector in $\mathbb{C}^n$ whose $i$th component is given by $x_i^r$.
\begin{Def}
A number $\lambda \in \mathbb{C}$ is an eigenvalue of a tensor $\A$ of dimension $n$ and order $r$ if
there is a nonzero vector $\x\in \mathbb{C}^n$ such that
	\[\A \x=\lambda \x^{[r-1]}.\]
We say that $\x$ is an eigenvector of $\A$ associated with the eigenvalue $\lambda$ and that
$(\lambda, \x)$ is an eigenpair of $\A$.
\end{Def}
Recall that an eigenpair $(\lambda,\x)$ is \emph{strictly nonzero} if the eigenvalue $\lambda$ is nonzero and all the entries of the eigenvector $\x$ are nonzero.

As usual, if $\h$ is an $r$-graph, a hypergraph $\mathcal{F}$ is a \emph{subgraph} of $\h$
if $V(\mathcal{F})\subseteq V(\h)$ and $E(\mathcal{F})\subseteq E(\h)$. For
$S\subset V$, we say that $\h[S]=(S,E(\h) \cap 2^{S})$ is the subgraph of
$\h$ \emph{induced} by $S$, where $2^S$ denotes the power set of $S$. The \emph{degree} of a vertex $v\in V$ is the number of
edges that contain $v$. Vertices of degree zero are said to be \emph{isolated} vertex.

\begin{Def}
Let $\h$ be an $r$-graph with $n$ vertices. The adjacency
tensor $A_\h$ of $\h$ is the tensor of dimension $n$ and order $r$ such that
\[
a_{i_1 \dots i_r}= \begin{cases} \frac{1}{(r-1)!},\quad \emph{if}\;
\{i_1,\dots,i_r\}\in E(\h), \\ \quad 0, \quad \;\;\; \emph{otherwise.}
\end{cases}\]
\end{Def}

If $\alpha=\{i_1,\dots,i_{s}\}$ is a set of integers, we denote $x^\alpha=x_{i_1}\dots x_{i_{s}}$. For an $r$-graph $\h$ and $i \in V(\h)$, we consider
\[E(i)=E_\h(i)=\{e-\{i\}:i\in e\in E(\h)\}.\]
By definition, the $i$th component of the adjacency tensor $\Ah$ of $\h$ is given by
\begin{equation}\label{eq_adjacency}
(\Ah\x)_i=\sum_{i_2,\dots, i_r =
1}^{n}\frac{1}{(r-1)!}x_{i_2}\cdots x_{i_r}=
  \sum_{\alpha\in E(i)}x^\alpha.
 \end{equation}
Therefore a pair $(\lambda,\x)$ is an eigenpair of $\h$, where $\x=(x_1,\ldots,x_n)$, if and only if the following system of equations is satisfied:
 \begin{equation}\label{system}
 \sum_{\alpha\in E(i)}x^\alpha=\lambda x_i^{r-1}\quad \forall i\in V(\h).
 \end{equation}

\begin{Def}
Let $\h=(V,E)$ be an $r$-graph and let $k \geq r$. We define the $k$-expansion of $\h$ as the $k$-graph $\h^k$, obtained by adding $k-r$ vertices of degree one
on each edge of $\h$. More precisely, we define sets $S_e=\{v^1_e, \ldots,v^{k-r}_e\}$ for each $e \in E$ such that $S_e \cap V=S_e \cap S_f=\emptyset$ for all $e,f \in E$, $e \neq f$. The sets of vertices and edges of $\h^k$ are
\[V(\h^k)=V \cup \bigcup_{e\in E} S_e \quad \emph{and}\quad E(\h^k)=\{e\cup S_e: \forall e \in E\}.\]	
\end{Def}

We say that the vertices of $V(\h)$ are the \emph{main vertices} of $\h^k$, while the vertices in some $S_e$ are the \emph{additional vertices}. In particular, $v(\h^k)=v(\h)+(k-r)e(\h)$ and $e(\h^k)=e(\h)$.

\begin{Exe} The $6$-expansion $(P_4)^6$ of the path $P_4$ is depicted in Figure \ref{fig:ex1}.
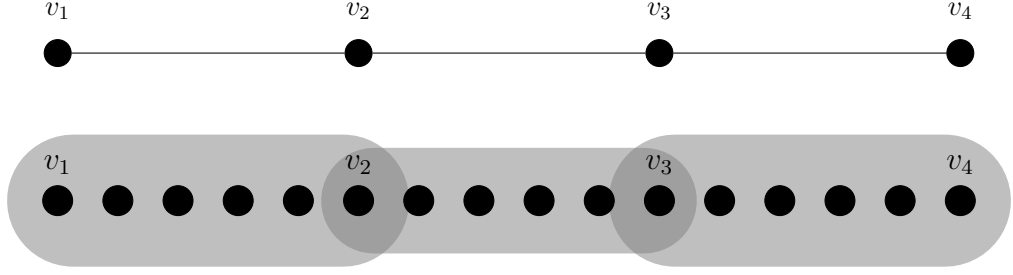
\begin{figure}[h]\label{fig:ex1}
  \centering
	\begin{tikzpicture}
	[scale=1,auto=left,every node/.style={circle,scale=0.9}]
	\node[draw,circle,fill=black,label=below:,label=above:\(v_1\)] (v1) at (0,0) {};
	\node[draw,circle,fill=black,label=below:,label=above:\(v_2\)] (v2) at (4,0) {};
	\node[draw,circle,fill=black,label=below:,label=above:\(v_3\)] (v3) at (8,0) {};
	\node[draw,circle,fill=black,label=below:,label=above:\(v_4\)] (v4) at (12,0) {};
	\path
	(v1) edge node[left]{} (v2)
	(v2) edge node[below]{} (v3)
	(v3) edge node[left]{} (v4);
	\end{tikzpicture}

	\vspace{0.5cm}
	
	\begin{tikzpicture}
	\node[draw,circle,fill=black,label=below:,label=above:\(v_1\)] (v1) at (0,0) {};
	\node[draw,circle,fill=black,label=below:,label=above:] (v21) at (0.8,0) {};
	\node[draw,circle,fill=black,label=below:,label=above:] (v22) at (1.6,0) {};
	\node[draw,circle,fill=black,label=below:,label=above:] (v23) at (2.4,0) {};
	\node[draw,circle,fill=black,label=below:,label=above:] (v24) at (3.2,0) {};
	\node[draw,circle,fill=black,label=below:,label=above:\(v_2\)] (v3) at (4,0) {};
	\node[draw,circle,fill=black,label=below:,label=above:] (v41) at (4.8,0) {};
	\node[draw,circle,fill=black,label=below:,label=above:] (v42) at (5.6,0) {};
	\node[draw,circle,fill=black,label=below:,label=above:] (v43) at (6.4,0) {};
	\node[draw,circle,fill=black,label=below:,label=above:] (v44) at (7.2,0) {};
	\node[draw,circle,fill=black,label=below:,label=above:\(v_3\)] (v5) at (8,0) {};
	\node[draw,circle,fill=black,label=below:,label=above:] (v61) at (8.8,0) {};
	\node[draw,circle,fill=black,label=below:,label=above:] (v62) at (9.6,0) {};
	\node[draw,circle,fill=black,label=below:,label=above:] (v63) at (10.4,0) {};
	\node[draw,circle,fill=black,label=below:,label=above:] (v64) at (11.2,0) {};
	\node[draw,circle,fill=black,label=below:,label=above:\(v_4\)] (v7) at (12,0) {};

	\begin{pgfonlayer}{background}
	\draw[edge,color=gray] (v1) -- (v3);
	\draw[edge,color=gray,line width=40pt] (v3) -- (v5);
	\draw[edge,color=gray] (v5) -- (v7);
	
	\end{pgfonlayer}
	
	\end{tikzpicture}
\caption{$P_4$ and its $6$-expansion $(P_4)^6$.}
\end{figure}
\end{Exe}

\begin{Def}				
Let $\h=(V,E)$ be an $r$-graph and let $s \geq 1$. We define the $s$-extension $\h_s$ of $\h$ as the $sr$-graph obtained by replacing each vertex $v_i \in V$
by a set $S_{v_i}=\{v_{i1}, \ldots, v_{is}\}$, where $S_v \cap S_w = \emptyset$ for $v \neq w$. More precisely, the sets of vertices and edges of $\h_s$ are
\[V(\h_s)=\bigcup_{v\in V} S_v \quad \emph{and} \quad E(\h_s)=\{S_{v_1} \cup \cdots \cup S_{v_r} \colon e=\{v_1,\ldots, v_r\} \in E\}\]
\end{Def}	

For simplicity, we identify one vertex in each $S_v$ with the original vertex $v$ and call it a \emph{main vertex}, while the remaining vertices are again called \emph{copies}. In particular, $v(\h_s)=s \cdot v(\h)$ and $e(\h_s)=e(\h)$.

\begin{Exe}
Figure \ref{fig:ex2} depicts the $3$-extension $(P_4)_3$ of $P_4$.

\begin{figure}[h]
  \centering
	\begin{tikzpicture}
	\node[draw,circle,fill=black,label=below:,label=above:\(v_{11}\)] (v11) at (0,0) {};
	\node[draw,circle,fill=black,label=below:,label=above:\(v_{12}\)] (v12) at (1,0) {};
	\node[draw,circle,fill=black,label=below:,label=above:\(v_{13}\)] (v13) at (2,0) {};
	\node[draw,circle,fill=black,label=below:,label=above:\(v_{21}\)] (v21) at (3.33,0) {};
	\node[draw,circle,fill=black,label=below:,label=above:\(v_{22}\)] (v22) at (4.33,0) {};
	\node[draw,circle,fill=black,label=below:,label=above:\(v_{23}\)] (v23) at (5.29,0) {};
	\node[draw,circle,fill=black,label=below:,label=above:\(v_{31}\)] (v31) at (6.72,0) {};
	\node[draw,circle,fill=black,label=below:,label=above:\(v_{32}\)] (v32) at (7.67,0) {};
	\node[draw,circle,fill=black,label=below:,label=above:\(v_{33}\)] (v33) at (8.67,0) {};
	\node[draw,circle,fill=black,label=below:,label=above:\(v_{41}\)] (v41) at (10,0) {};
	\node[draw,circle,fill=black,label=below:,label=above:\(v_{42}\)] (v42) at (11,0) {};
	\node[draw,circle,fill=black,label=below:,label=above:\(v_{43}\)] (v43) at (12,0) {};

	\begin{pgfonlayer}{background}
	\draw[edge,color=gray] (v11) -- (v23);
	\draw[edge,color=gray,line width=40pt] (v21) -- (v33);
	\draw[edge,color=gray] (v31) -- (v43);
	
	\end{pgfonlayer}
	\end{tikzpicture}
\label{fig:ex2}\caption{The 3-extension $(P_4)_3$ of $P_4$.}
\end{figure}
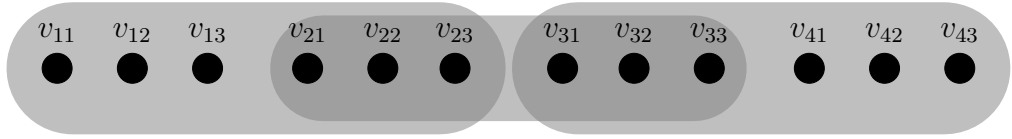
\end{Exe}

We are now ready to describe the class of  \emph{generalized power hypergraphs}, which was introduced in~ \cite{Kang}. Each of its elements may be obtained as follows. Fix integers $r \geq 2$, $s \geq 1$ and $k \geq rs$, and consider an $r$-graph $\h$. The generalized power hypergraph $\h^k_s$ is the $k$-graph $(\h_s)^k$. We say that $\h$ is the \emph{base hypergraph}.

\begin{Exe} Figure \ref{fig:ex3} depicts the generalized power hypergraph $(P_4)_2^6$.

\begin{figure}[h]
  \centering
	\begin{tikzpicture}
	\node[draw,circle,fill=black,label=below:,label=above:\(v_{11}\)] (v1) at (0,0) {};
	\node[draw,circle,fill=black,label=below:,label=above:\(v_{12}\)] (v11) at (1,0) {};
	\node[draw,circle,fill=black,label=below:,label=above:] (v22) at (1.84,0) {};
	\node[draw,circle,fill=black,label=below:,label=above:] (v23) at (2.64,0) {};
	\node[draw,circle,fill=black,label=below:,label=above:\(v_{21}\)] (v3) at (3.5,0) {};
	\node[draw,circle,fill=black,label=below:,label=above:\(v_{22}\)] (v31) at (4.5,0) {};
	\node[draw,circle,fill=black,label=below:,label=above:] (v42) at (5.42,0) {};
	\node[draw,circle,fill=black,label=below:,label=above:] (v44) at (6.58,0) {};
	\node[draw,circle,fill=black,label=below:,label=above:\(v_{31}\)] (v5) at (7.5,0) {};
	\node[draw,circle,fill=black,label=below:,label=above:\(v_{32}\)] (v51) at (8.5,0) {};
	\node[draw,circle,fill=black,label=below:,label=above:] (v62) at (9.34,0) {};
	\node[draw,circle,fill=black,label=below:,label=above:] (v63) at (10.14,0) {};
	\node[draw,circle,fill=black,label=below:,label=above:\(v_{41}\)] (v64) at (11,0) {};
	\node[draw,circle,fill=black,label=below:,label=above:\(v_{42}\)] (v7) at (12,0) {};

	\begin{pgfonlayer}{background}
	\draw[edge,color=gray] (v1) -- (v31);
	\draw[edge,color=gray,line width=40pt] (v3) -- (v51);
	\draw[edge,color=gray] (v5) -- (v7);
	
	\end{pgfonlayer}
	
	\end{tikzpicture}

  \caption{The generalized power hypergraph $(P_4)^6_2$.}
  \label{fig:ex3}
\end{figure}
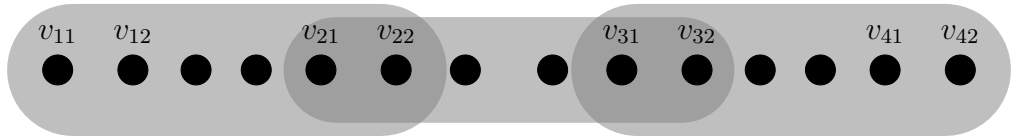		

\end{Exe}

\begin{Def}
Let $\h$ be a $k$-graph. For $I\subset V(\h)$ we define $\h \triangleleft I$
as the subgraph of $\h$ obtained by removing all vertices of $I$,
all edges that have vertices in $I$, and all vertices that become isolated
after the removal of these edges.
\end{Def}

\begin{Def}
Let $\h$ be a $k$-graph. For $A\subset E(\h)$ we define $\h -A$ as the
subgraph of $\h$ obtained by removing all edges of $A$ and all
vertices that become isolated after the removal of these edges.
\end{Def}

To differentiate the edges of $\h$ and $\h^k_s$, when necessary, the edges of
$\h$ will be called $r$-edges and the edges of $\h^k_s$ will be called
$k$-edges. For a subset $A\subset E(\h)$, we denote by $A^k_s$ the subset of
$k$-edges in $\h^k_s$ obtained from the $r$-edges of $A$.

The following lemma summarizes simple facts about these operations.

\begin{Lem}\label{LemaPowerRemocao}
Let $\h$ be an $r$-graph for some $r \geq 2$ and let $s\geq 1$ and $k \geq rs$ be integers.

\begin{itemize}

\item[(a)] If $e \in E(\h)$ contains a vertex $v$ of
degree one, then $\h\triangleleft v = \h - e$.

\item[(b)] If $A\subset E(\h)$, then $(\h-A)^k_s=\h^k_s-A^k_s$.

\item[(d)] If $I\subset V(\h)$, then $(\h\triangleleft
I)^k_s=\h^k_s\triangleleft I.$
\end{itemize}
\end{Lem}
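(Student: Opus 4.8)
The plan is to verify each identity by unwinding the definitions of the operations involved, keeping careful track of which vertices are removed because they become isolated. Throughout, for a subgraph $\mathcal{F}$ of $\h$ I identify the vertices of $(\mathcal{F})^k_s$ with the corresponding vertices of $\h^k_s$ in the obvious way (main vertices with main vertices, copies with copies, and the additional vertices created on a $k$-edge with those created on the same $r$-edge of $\mathcal{F}$); this is legitimate because the additional vertices in a $k$-expansion may be chosen consistently for $\h$ and for every subgraph of $\h$, so that all of the hypergraphs in the statement live, up to this identification, inside $\h^k_s$. For (a), since $v$ has degree one the edge $e$ is the only edge of $\h$ containing $v$, so $\h\triangleleft v$ removes precisely the edge $e$, the vertex $v$, and those vertices of $e\setminus\{v\}$ having degree one in $\h$; and $\h-e$ removes precisely the edge $e$ together with the vertices that thereby become isolated, namely $v$ (as $\deg_\h(v)=1$) and those vertices of $e\setminus\{v\}$ of degree one. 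Hence $\h\triangleleft v$ and $\h-e$ have the same vertices and the same edges.

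For (b), recall that $A^k_s$ denotes the set of $k$-edges of $\h^k_s$ built from the $r$-edges in $A$. Passing from $\h$ to $\h-A$ destroys no $r$-edge of $E(\h)\setminus A$ --- only the edges of $A$ and the vertices that become isolated are deleted --- so applying $(\,\cdot\,)^k_s$ to $\h-A$ produces exactly the $k$-edges built from $E(\h)\setminus A$, which are also precisely the $k$-edges of $\h^k_s$ that survive the removal of $A^k_s$. At the level of vertices, an additional vertex created on the $k$-edge coming from $e\in E(\h)$ belongs to $(\h-A)^k_s$ iff $e\notin A$, and that is exactly when it survives in $\h^k_s-A^k_s$ (if $e\in A$ it becomes isolated and is removed); and a copy of a main vertex $v_i$ belongs to $(\h-A)^k_s$ iff $v_i$ is a non-isolated vertex of $\h-A$, i.e.\ iff $v_i$ lies in some edge of $E(\h)\setminus A$, which is again exactly when that copy survives in $\h^k_s-A^k_s$, since the $k$-edges through the copies of $v_i$ are those built from the $r$-edges containing $v_i$. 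Therefore $(\h-A)^k_s=\h^k_s-A^k_s$.

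For (d), put $A=\{e\in E(\h):e\cap I\neq\emptyset\}$ and assume, as we may, that $I$ contains no isolated vertex of $\h$. Removing $A$ from $\h$ isolates every vertex of $I$, whence $\h\triangleleft I=\h-A$; and, identifying $I$ with the corresponding set of main vertices of $\h^k_s$, a $k$-edge meets $I$ iff its underlying $r$-edge meets $I$, and removing all such $k$-edges isolates every one of those main vertices (each $k$-edge through the main vertex of $S_{v_i}$ is built from an $r$-edge containing $v_i$), whence $\h^k_s\triangleleft I=\h^k_s-A^k_s$. Combining these observations with (b) yields $(\h\triangleleft I)^k_s=(\h-A)^k_s=\h^k_s-A^k_s=\h^k_s\triangleleft I$.

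The step that has to be carried out with care --- rather than dismissed as obvious --- is the bookkeeping of isolated vertices in (b) and (d): one must check simultaneously that on the $\h^k_s$-side exactly the additional vertices and copies originating from the deleted $r$-edges disappear, and that on the base side the deletions $\h-A$ and $\h\triangleleft I$ destroy no $r$-edge (only isolated vertices), so that the natural correspondence between $r$-edges and $k$-edges descends to a bijection after deletion. (The hypothesis that $I$ has no isolated vertex in (d) is only a harmless normalization, since isolated vertices of $\h$ contribute nothing to any of the hypergraphs involved.) Everything else is an immediate verification against the definitions.
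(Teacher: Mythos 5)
Your proof is correct and follows essentially the same route as the paper's, namely direct verification from the definitions with careful bookkeeping of which vertices become isolated. The only organizational differences are that the paper proves (b) by factoring $(\cdot)^k_s$ into the expansion and extension steps separately and handles (d) "by the same strategy," whereas you verify the combined operation directly and then reduce (d) to (b) via the identity $\h\triangleleft I=\h-A$ with $A=\{e\in E(\h):e\cap I\neq\emptyset\}$; both are routine, and your version is, if anything, more complete than the paper's sketch.
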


\begin{proof}
To prove part (a), note that $\h\triangleleft v$ is obtained from $\h$ by removing $v$, the edge $e$ and any vertices that become isolated when $e$ is removed. Since $v$ has degree 1, it is one of the isolated vertices that are removed from $\h$ to produce $\h-e$, so that $\h\triangleleft v$ and $\h-e$ are the same hypergraph. To prove part (b), one may prove separately that $(\h-A)^k=\h^k-A^k$ and that $(\h-A)_s=\h_s-A_s$, and combine the two results. Part (c) may be easily obtained with this same strategy.
\end{proof}

We are now ready to establish some spectral properties involving these operations, as well as expansions and extensions. Before doing this, we prove Theorem~\ref{thm_subgraphs}, which generalizes~\cite[Theorem 13]{Zhou}.
\begin{proof}[Proof of Theorem~\ref{thm_subgraphs}] We start with part (a). Let $\h$ be an $r$-graph and assume that $v$ and $u$ are two vertices that are contained in exactly the same edges. Let $(\lambda,\x)$ be an eigenpair of $\h\triangleleft v$. Define a vector $\mathbf{y}$ of dimension $n=v(\h)$ by $y_i=x_i$ if $i\in
V(\h\triangleleft v)$ and $y_i=0$ otherwise. We see that
\[(\Ah\mathbf{y})_v = \sum_{\alpha\in E(v)}y^{\alpha}=\sum_{\alpha\in E(v)}y_{u}y^{\alpha-\{u\}} =
\sum_{\alpha\in E(v)}0y^{\alpha-\{u\}} =0=\lambda y_v^{r-1}.\]

Consider a vertex $w \in V(\h)-\{v\}$ such that $ w \not \in \h\triangleleft
v$. We have
\[(\Ah\mathbf{y})_w =\sum_{\alpha\in
E(w)}y^{\alpha}=\sum_{\alpha\in E(w)}y_{v}y^{\alpha-\{v\}} =   \sum_{\alpha\in
E(w)}0y^{\alpha-\{v\}} =0=\lambda y_w^{r-1}.\]

Finally, consider a vertex $w \in V(\h\triangleleft v)$. Let $e_1,\ldots,e_p$ be
the edges of $\h\triangleleft v$ that contain $w$ and let $f_{1},\ldots,f_{q}$
be the edges of $\h$ that contain $v$ and $w$. We have
\begin{eqnarray*}
(\Ah\mathbf{y})_w&=&\sum_{\alpha\in E(w)}y^{\alpha}=\sum_{i=1}^qy^{f_i-\{w\}}+\sum_{j=1}^py^{e_j-\{w\}}=0+\sum_{j=1}^px^{e_j-\{w\}}\\
&=&(\mathbf{A}_{\h \triangleleft v}\mathbf{x})_w=\lambda x_w^{r-1}=\lambda y_w^{r-1},
\end{eqnarray*}
as required.

For part (b), note that the result follows immediately from (a) if $v$ has degree 1, as the other vertex of degree 1 in the edge $e$ containing $v$, call it $u$, lies in the same edges as $v$. So we may assume that $v$ has degree at least two. Let $(\lambda,\x)$ be an eigenpair of $\h\triangleleft v$.
Define a vector $\mathbf{y}$ of dimension $n=v(\h)$ by $y_i=x_i$ if $i\in
V(\h\triangleleft v)$, and $y_i=0$ otherwise.

For each $\alpha\in E(v)$, let $w_{\alpha}\in \alpha$ be a vertex of degree one given by the hypothesis. Note that \[(\Ah\mathbf{y})_v =\sum_{\alpha\in
E(v)}y^{\alpha}=\sum_{\alpha\in E(v)} y_{w_{\alpha}}y^{\alpha-\{w_{\alpha}\}} =
\sum_{\alpha\in E(v)}0y^{\alpha-\{w_{\alpha}\}} =0=\lambda y_v^{r-1}.\]

Next suppose that $w$ is a vertex of $\h$ that becomes isolated when $v$ is removed, so that all edges in $E(\h)$ containing $w$ also contain $v$. We have
\[(\Ah\mathbf{y})_w=\sum_{e \in E(w)}y^{e-\{w\}}=\sum_{e \in E(w)} y_vy^{e-\{w,v\}}=\sum_{e \in E(w)} 0y^{e-\{w,v\}}=0=\lambda y_w^{r-1}.\]

Finally, consider a vertex $u$ in $\h\triangleleft v$. Let
$e_1,\ldots,e_p$ be the edges in $\h\triangleleft v$ that contain $u$ and
$f_{1},\ldots,f_{q}$ the edges that contain
$u$ and $v$. We have
\[(\Ah\mathbf{y})_u=\sum_{\alpha\in
E(u)}y^{\alpha}=\sum_{i=1}^qy^{f_i-\{u\}}+\sum_{j=1}^py^{e_j-\{u\}}=0+\sum_{j=1}^px^{e_j-\{u\}}=
\lambda x_u^{r-1} =\lambda y_u^{r-1}.\]
This shows that $(\lambda,\y)$ is an
eigenpair of $\h$.
\end{proof}

The following are immediate consequences of Theorem~\ref{thm_subgraphs}.
\begin{Cor}\label{Cor_hs}
Let $\h$ be an $r$-graph, $s \geq 2$ and $I \subset V(\h_s)$. If  $V(\h_s\triangleleft I) \neq \emptyset$, then the eigenvalues of $\h_s\triangleleft I$ are eigenvalues of $\h_s$.
\end{Cor}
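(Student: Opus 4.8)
The plan is to obtain $\h_s\triangleleft I$ from $\h_s$ through a sequence of single-vertex deletions, each of which is an instance of Theorem~\ref{thm_subgraphs}(a), and then to conclude by transitivity that each eigenvalue is preserved along the way. The whole argument hinges on one structural feature of the $s$-extension.

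The key observation is that in $\h_s$, for every vertex $v$ of $\h$, the $s$ vertices of $S_v$ lie in exactly the same edges, namely the images $S_{v_1}\cup\cdots\cup S_{v_r}$ of the edges $e=\{v_1,\dots,v_r\}$ of $\h$ with $v\in e$. Since $s\geq 2$, each class $S_v$ thus contains two vertices that are twins in the sense required by Theorem~\ref{thm_subgraphs}(a); moreover this twin relation persists in any hypergraph of the form $\h_s\triangleleft T$, because when one deletes a set of vertices together with all edges meeting it, two vertices lying in the same edges are either both deleted or both kept, and in the latter case they still lie in exactly the same edges. Consequently, whenever a vertex of some $S_v$ survives in the current hypergraph, at least one of its twins survives as well.

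I would then order $I=\{w_1,\dots,w_m\}$, set $\mathcal{H}_0=\h_s$ and $\mathcal{H}_t=\mathcal{H}_{t-1}\triangleleft w_t$ (reading this as $\mathcal{H}_{t-1}$ when $w_t$ has already disappeared as a by-product of an earlier deletion), and check that $\mathcal{H}_m=\h_s\triangleleft I$. Whenever $w_t$ is still present it lies in some class $S_v$, hence has a twin $u$ in $\mathcal{H}_{t-1}$ by the previous paragraph, so Theorem~\ref{thm_subgraphs}(a) applies and gives that the eigenvalues of $\mathcal{H}_t$ are eigenvalues of $\mathcal{H}_{t-1}$. Chaining these inclusions proves that the eigenvalues of $\h_s\triangleleft I$ are eigenvalues of $\h_s$.

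The one point I expect to be delicate is the bookkeeping about which vertices have already been removed at the moment Theorem~\ref{thm_subgraphs}(a) is invoked, and in particular the single case in which an entire class $S_v$ must be deleted while $v$ is an isolated vertex of $\h$: then the vertices of $S_v$ are isolated in $\h_s$, no collateral vertices vanish as we remove them, and after $s-1$ deletions a lone isolated vertex may remain with no twin left. This is the only spot where the twin argument genuinely fails, and it is dispatched by the trivial remark that deleting a single isolated vertex preserves all eigenvalues, since any eigenvector extends to it by a zero entry. Everything else is a direct application of Theorem~\ref{thm_subgraphs}(a).
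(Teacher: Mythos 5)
Your proof is correct and follows essentially the same route as the paper: order the vertices of $I$, delete them one at a time, and at each step invoke Theorem~\ref{thm_subgraphs}(a) using the fact that the surviving vertices of each class $S_v$ remain twins throughout the process. Your extra care with the degenerate case of an isolated class $S_v$ (handled by the trivial zero-extension of eigenvectors) is a minor refinement the paper glosses over, but it does not change the argument.
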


\begin{proof}
To obtain $\h_s\triangleleft I$, we may order the vertices in $I$ arbitrarily and remove them one by one in this order (of course, nothing is done if a vertex became of degree 1 in the previous step). The result follows by applying Theorem~\ref{thm_subgraphs}(a) as any vertex of $\h_s$ lies in the same edges as the other vertices that lie in the same set $S_v$. Note that we never delete more than one vertex in some $S_v$, as the other vertices become isolated when we remove the first vertex of $S_v$.
\end{proof}

\begin{Cor}\label{Cor_hr}
Let $\h$ be an $r$-graph and $I \subset V(\h)$. If  $V(\h^{r+1}\triangleleft I) \neq \emptyset$, then the eigenvalues of
$\h^{r+1}\triangleleft I$ are eigenvalues of $\h^{r+1}$.	
\end{Cor}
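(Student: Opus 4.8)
The plan is to reduce this to Theorem~\ref{thm_subgraphs}(b), exactly as Corollary~\ref{Cor_hs} reduces to part (a). First I would fix $I \subset V(\h)$ with $V(\h^{r+1}\triangleleft I) \neq \emptyset$, and note that removing the set $I$ can be carried out one vertex at a time: order $I$ arbitrarily and delete its elements successively, doing nothing at a step where the vertex in question has already become isolated (or of degree one, so that its removal is forced by an earlier deletion). So it suffices to show that for a single main vertex $v \in V(\h)$ with $V(\h^{r+1}\triangleleft v) \neq \emptyset$, the eigenvalues of $\h^{r+1}\triangleleft v$ are eigenvalues of $\h^{r+1}$, and then iterate — at each stage the hypergraph we are working with is still of the form $\mathcal{G}^{r+1}$ for a subgraph $\mathcal{G}$ of $\h$ (using Lemma~\ref{LemaPowerRemocao}(a) and the analogue of Lemma~\ref{LemaPowerRemocao}(c) for the deletion of main vertices), so the hypothesis needed at the next stage is again available.

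The key observation is that in $\h^{r+1}$ each $r$-edge $e$ of $\h$ becomes a $(r+1)$-edge $e \cup \{u_e\}$, where $u_e$ is the single added vertex of degree one attached to $e$. Hence, for the main vertex $v$, \emph{every} edge of $\h^{r+1}$ containing $v$ is of the form $e \cup \{u_e\}$ with $v \in e$, and this edge contains the vertex $u_e$, which has degree one in $\h^{r+1}$ and is distinct from $v$. Thus $v$ satisfies the hypothesis of Theorem~\ref{thm_subgraphs}(b) in $\h^{r+1}$: every edge containing $v$ has a degree-one vertex other than $v$. Applying that theorem gives that the eigenvalues of $\h^{r+1}\triangleleft v$ are eigenvalues of $\h^{r+1}$. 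Finally, by Lemma~\ref{LemaPowerRemocao}(a) we have $\h^{r+1}\triangleleft v = \h^{r+1} - E^{r+1}(v)$, where $E^{r+1}(v)$ is the set of $(r+1)$-edges through $v$, and by Lemma~\ref{LemaPowerRemocao}(c) this equals $(\h \triangleleft v)^{r+1}$; so the reduced hypergraph is again a power hypergraph, and the induction proceeds.

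The only mildly delicate point — and the one I would be most careful about — is the bookkeeping in the iteration: after deleting some vertices of $I$, a later vertex $w \in I$ may no longer be a main vertex of the current power hypergraph (it could have been deleted already as an isolated vertex, or it could fail the degree condition if its incident structure changed). But in fact, when we delete a main vertex $v$ from $\h^{r+1}$, the only vertices that get removed along with $v$ and its incident $(r+1)$-edges are the added vertices $u_e$ for those edges, plus any main vertex that was only incident to edges through $v$; no main vertex of degree $\geq 1$ in $\h \triangleleft v$ is affected in a way that breaks the degree-one hypothesis, precisely because every added vertex still present has degree one. So at each step the vertex we remove is either already gone (and we skip it) or is still a main vertex satisfying the hypothesis of Theorem~\ref{thm_subgraphs}(b) in the current power hypergraph. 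Chaining the inclusions of spectra then yields the claim.
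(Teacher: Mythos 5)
Your proof is correct and follows essentially the same route as the paper: the authors likewise observe that, since $I$ consists only of main vertices, every edge of $\h^{r+1}$ retains its additional degree-one vertex outside $I$, and they apply Theorem~\ref{thm_subgraphs}(b) by removing the vertices of $I$ one by one. Your extra bookkeeping about the intermediate hypergraphs remaining of the form $\mathcal{G}^{r+1}$ is a reasonable elaboration of the step the paper leaves implicit.
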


\begin{proof} Since all vertices in $I$ are main vertices, all edges of $\h^{r+1}$ have a degree one vertex that is not in $I$. The result follows immediately from Theorem~\ref{thm_subgraphs}(b) by removing the vertices in $I$ one by one.
\end{proof}

\begin{Cor}\label{Cor_hk}
Let $\h$ be an $r$-graph and $I \subset V(\h^k)$ for $k > r+1$. If  $V(\h^k\triangleleft I) \neq \emptyset$, then the
eigenvalues of $\h^k\triangleleft I$ are eigenvalues of $\h^k$.
\end{Cor}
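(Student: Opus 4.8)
The plan is to reduce the statement to Theorem~\ref{thm_subgraphs}(b) by removing the vertices of $I$ one at a time, exactly as in the proofs of Corollaries~\ref{Cor_hs} and \ref{Cor_hr}. Fix an arbitrary ordering $v_1,\ldots,v_t$ of the vertices in $I$ and set $\h_0=\h^k$ and $\h_j = \h_{j-1}\triangleleft v_j$ for $j=1,\ldots,t$ (where, by convention, $\h_j=\h_{j-1}$ if $v_j$ is no longer a vertex of $\h_{j-1}$, or if it already has degree one there). The final hypergraph $\h_t$ is exactly $\h^k\triangleleft I$, which is nonempty by hypothesis, so every $\h_j$ is nonempty as well. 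By transitivity, it suffices to show that the eigenvalues of $\h_{j}$ are eigenvalues of $\h_{j-1}$ whenever $\h_j \neq \h_{j-1}$, and for this I would apply Theorem~\ref{thm_subgraphs}(b) to the vertex $v_j$ in the $k$-graph $\h_{j-1}$.

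The crux is therefore to verify the hypothesis of Theorem~\ref{thm_subgraphs}(b): every edge of $\h_{j-1}$ containing $v_j$ must contain a vertex of degree one distinct from $v_j$. Here I would use the structure of $\h^k = (\h)^k$ with $k > r+1$: every $k$-edge of $\h^k$ is of the form $e\cup S_e$ where $e\in E(\h)$ has size $r$ and $|S_e| = k-r \geq 2$, and all vertices of $S_e$ have degree one in $\h^k$. Since the vertices of $I$ are main vertices of $\h^k$ (they lie in $V(\h)$), deleting them and any vertices that become isolated only ever removes main vertices: an additional vertex $v_e^i\in S_e$ becomes isolated precisely when the edge $e\cup S_e$ is deleted, in which case it is removed together with that edge, not left behind. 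Consequently every surviving $k$-edge of $\h_{j-1}$ still has the form $e\cup S_e$ with $|S_e|\geq 2$ additional vertices, all of degree one in $\h_{j-1}$ (their degree is one in $\h^k$ and can only stay one or drop to zero). In particular, if $v_j$ lies in such an edge, that edge contains at least two degree-one vertices from $S_e$, at least one of which is different from $v_j$ (indeed $v_j$ is a main vertex, so all of $S_e$ qualifies). Hence the hypothesis of Theorem~\ref{thm_subgraphs}(b) holds, and its conclusion gives the desired spectral inclusion for this single step.

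The one point requiring a little care—and the likely main obstacle—is the bookkeeping of which vertices are main vertices and which have degree one as we pass from $\h_{j-1}$ to $\h_j$: one must be sure that the ``degree one'' property of the additional vertices is genuinely preserved under vertex deletion, and that no main vertex of degree one is ever created in a way that would spoil the invariant (it cannot hurt us, but the cleanest statement is the invariant ``every edge of $\h_{j-1}$ is $e\cup S_e$ with all of $S_e$ of degree one and $|S_e|\geq 2$''). Once this invariant is stated and checked by induction on $j$, the proof is a one-line appeal to Theorem~\ref{thm_subgraphs}(b) at each step, composed over $j=1,\ldots,t$.
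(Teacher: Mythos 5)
Your reduction to Theorem~\ref{thm_subgraphs} step by step is the right strategy (and is exactly what the paper does, in one line), but your proof contains a misreading of the hypothesis that leaves a genuine gap. You assert that ``the vertices of $I$ are main vertices of $\h^k$ (they lie in $V(\h)$)'' and build the rest of the argument on that. In Corollary~\ref{Cor_hk} the hypothesis is $I\subset V(\h^k)$, not $I\subset V(\h)$: the set $I$ may contain additional vertices. This is precisely the point of distinction with Corollary~\ref{Cor_hr}, where only main vertices may be removed; the paper remarks on this immediately after the corollary (``There is no such restriction in Corollary~\ref{Cor_hk}, since we are adding at least two vertices of degree one in each edge''). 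As written, your proof only establishes the special case $I\subseteq V(\h)$, which is Corollary~\ref{Cor_hr} with a weaker hypothesis on $k$, and says nothing about deleting an additional vertex.

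The good news is that your framework absorbs the missing case with almost no extra work, and the condition $k>r+1$ is exactly what makes it go through. If $v_j$ is an additional vertex, it lies in a unique edge $f\cup S_f$ with $|S_f|=k-r\geq 2$, so that edge contains another degree-one vertex distinct from $v_j$ and Theorem~\ref{thm_subgraphs}(b) applies (in fact part (a) applies as well, since the vertices of $S_f$ all lie in exactly the same single edge; the paper's one-line proof invokes part (a)). Your invariant also survives: deleting $v_j$ deletes the whole edge $f\cup S_f$ and hence the rest of $S_f$ becomes isolated and is removed with it, so every surviving edge still carries its full complement of at least two degree-one additional vertices. You should state the invariant for arbitrary $v_j\in V(\h^k)$ rather than only for main vertices. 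One further small point: your convention of skipping $v_j$ ``if it already has degree one'' in $\h_{j-1}$ would leave $v_j$ in the final hypergraph and give $\h_t\neq\h^k\triangleleft I$; you mean to skip $v_j$ only when it has already been removed (degree zero), as in the proof of Corollary~\ref{Cor_hs}.
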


\begin{proof} Since $k>r+1$, it holds that every edge contains at least two vertices of degree 1, and hence Theorem~\ref{thm_subgraphs}(a) always applies.
\end{proof}

Note that, in Corollary~\ref{Cor_hr}, we may only remove main vertices. In
general, we cannot remove additional vertices because we do not know whether
there are other vertices of degree one in the edge. There is no such restriction in Corollary~\ref{Cor_hk}, since we are adding at least two vertices of degree one in each
edge.

To conclude this section, we state a simple fact that may be viewed as a converse to the above corollaries.
\begin{Lem}\label{LemEigenpair_hk} Let $\h$ be an $r$-graph. If $(\lambda,\x)$ is an eigenpair
	of $\h$ with $\lambda\neq 0$, then there is a strictly nonzero eigenpair $(\lambda,\y)$  of
	$\h \triangleleft I$ for some $I\subset V(\h)$.
\end{Lem}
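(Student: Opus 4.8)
The statement asserts that any nonzero eigenvalue of $\h$ is realized as a \emph{strictly nonzero} eigenpair of some (induced) subgraph of the form $\h\triangleleft I$. The natural approach is induction on the number of zero entries of the eigenvector $\x$. If $(\lambda,\x)$ is already strictly nonzero, take $I=\emptyset$ and we are done, since $\h\triangleleft\emptyset=\h$. So suppose some entry of $\x$ vanishes, say $x_v=0$ for some $v\in V(\h)$. I would then pass to $\h\triangleleft v$ and show that the restriction of $\x$ to $V(\h\triangleleft v)$ is still an eigenvector for the same $\lambda$; iterating (equivalently, restarting the induction on the smaller hypergraph) yields the desired $I$.

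The key step is verifying that $\y:=\x|_{V(\h\triangleleft v)}$ is an eigenvector of $\h\triangleleft v$ with eigenvalue $\lambda$. First I would check that $\y$ is not the zero vector: since $\lambda\neq 0$, the system \eqref{system} forces $\sum_{\alpha\in E(i)}x^\alpha=\lambda x_i^{r-1}$ for all $i$; picking an index $i$ with $x_i\neq 0$ (such exists as $\x\neq\mathbf 0$), the left side is nonzero, so some monomial $x^\alpha$ with $\alpha\in E_\h(i)$ is nonzero, i.e. there is an edge through $i$ all of whose vertices carry nonzero entries — in particular $i$ survives in $\h\triangleleft v$, so $\y\neq\mathbf 0$. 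Next, for each surviving vertex $w\in V(\h\triangleleft v)$ I would split $E_\h(w)$ into edges that contain $v$ and edges that do not. Every edge containing $v$ contributes a monomial with the factor $x_v=0$, hence contributes $0$; every edge not containing $v$ and not containing any other deleted (isolated) vertex is exactly an edge of $\h\triangleleft v$; and any edge not containing $v$ but containing a vertex $u$ that became isolated must actually have had $u$ only in edges through $v$, which is impossible unless that edge also passes through $v$ — so in fact the edges of $\h$ through $w$ not containing $v$ are precisely the edges of $\h\triangleleft v$ through $w$. Therefore $(\A_{\h}\x)_w=(\A_{\h\triangleleft v}\y)_w=\lambda x_w^{r-1}=\lambda y_w^{r-1}$, which is the eigenvalue equation for $\h\triangleleft v$ at $w$.

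To organize the induction cleanly, I would let $Z=\{v\in V(\h):x_v=0\}$ and take $I=Z\cap V'$, where $V'$ is the set of vertices that are \emph{not} removed as isolated when all edges meeting $Z$ are deleted — but it is simpler to just remove the zero-entry vertices one at a time. Concretely: if $\x$ has a zero entry, delete one such vertex $v$, obtain by the argument above a nonzero eigenpair $(\lambda,\y)$ of $\h\triangleleft v$ with strictly fewer zero entries (the entry at $v$ is gone, and no new zeros are created); since $\h\triangleleft v$ has fewer vertices, induction on $v(\h)$ (or on the number of zero entries of the eigenvector) produces $J\subset V(\h\triangleleft v)$ with a strictly nonzero eigenpair $(\lambda,\mathbf z)$ of $(\h\triangleleft v)\triangleleft J$; finally $(\h\triangleleft v)\triangleleft J=\h\triangleleft I$ with $I=(\{v\}\cup J)\cap V(\h)$ suitably interpreted, so we are done.

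The main obstacle — really the only subtle point — is the bookkeeping around vertices that become \emph{isolated} when the edges through $v$ are removed: one must make sure that restricting $\x$ behaves well at such vertices (they had $\x$-value forced to be irrelevant, since all their edges pass through $v$ and hence vanish) and that they genuinely do not belong to $\h\triangleleft v$, so that the sum defining $(\A_{\h\triangleleft v}\y)_w$ ranges over exactly the right index set. Once this is pinned down, the eigenvalue identity at every surviving vertex is a one-line computation, and the induction closes immediately.
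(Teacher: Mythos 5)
Your proof is correct and follows essentially the same route as the paper: restrict $\x$ to its support, use $\lambda\neq 0$ to see the restriction is a nonzero vector on a nonempty vertex set, and verify the eigenvalue equation at each surviving vertex by splitting its edges into those meeting the deleted set (which contribute $0$) and those that survive. The only difference is organizational — the paper deletes all zero-entry vertices at once with $I=\{u: x_u=0\}$, while you peel them off one at a time and induct, which costs you the extra (true but unproved) bookkeeping identity $(\h\triangleleft v)\triangleleft J=\h\triangleleft(\{v\}\cup J)$.
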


\begin{proof} If $(\lambda,\x)$ is a strictly nonzero eigenpair, then we
	consider $I=\emptyset$ and $\h \triangleleft I = \h$. Otherwise, consider the nonempty set
	$I=\{u\in V(\h) \colon x_u=0\}$ and define a vector $\mathbf{y}$ of dimension $m=v(\h \triangleleft I)$ with $y_u=x_u$ if $u\in
	V(\h \triangleleft I)$. Clearly, all entries of $\mathbf{y}$ are nonzero, and we claim that $V(\h \triangleleft I) \neq \emptyset$. To see why this is true, first note that $I\neq V$, otherwise $\x= 0$ would not be an eigenvector. If $V(\h \triangleleft I) = \emptyset$, any vertex $v$ in $V-I$ must removed together with the vertices of $I$, that is, for each edge $e$ containing $v$ there is a vertex $u_e$ in this edge such that $u_e\in I$. This implies that, with respect to $v$, equation~\eqref{system} becomes	
$$\lambda x_v^{r-1}=\sum_{\alpha\in E(v)}x^\alpha=0,$$
which would lead to $\lambda=0$, a contradiction. This proves that $V(\h \triangleleft I) \neq \emptyset$.

	Let $u \in V(\h \triangleleft I)$ and $e_1,\ldots,e_p \in E(\h)$
	be the edges that contain $u$ and some vertex of $I$ and let
	$f_1,\ldots,f_q \in E(\h)$ be the other edges that contain $u$. We have
	\[(\A_{\h \triangleleft I}\mathbf{y})_u=\sum_{\alpha\in
		E_{\h \triangleleft I}(u)}y^{\alpha}= \sum_{i=1}^qy^{f_i-\{u\}}=
	\underbrace{\sum_{i=1}^px^{e_i-\{u\}}}_{= \; 0}+\sum_{i=1}^qx^{f_i-\{u\}}=\lambda
	x_u^{k-1}=\lambda y_u^{k-1},\] and the result follows. 	
\end{proof}

\section{The spectrum of generalized power hypergraphs}\label{sec:main}
In this final section, we shall prove Theorem~\ref{Main}. We start with algebraic properties of the eigenvectors of an $r$-graph $\h$ and of its expansions and extensions.

\begin{Lem}\label{LemaTwoVertices}
	Let $\h$ be an $r$-graph. If $\h$ has two vertices $v_1$ and $v_2$ that lie in the same edges, then for each nonzero eigenvalue
	$\lambda$ of $\h$, the coordinates $x_{v_1}$ and $x_{v_2}$ of an eigenvector $\x$ associated with $\lambda$ differ only by a multiple of an $r$-th root of unity. More precisely,
	$x_{v_1}=\varepsilon x_{v_2}$ with $\varepsilon^r=1$.		
\end{Lem}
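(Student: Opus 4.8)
The plan is to exploit the defining eigenvalue equations~\eqref{system} at the two vertices $v_1$ and $v_2$ directly. Since $v_1$ and $v_2$ lie in exactly the same edges, the index sets $E(v_1)$ and $E(v_2)$ are closely related: for every $\alpha \in E(v_1)$ we have $v_2 \in \alpha$, and replacing $v_2$ by $v_1$ in $\alpha$ yields an element of $E(v_2)$, and this correspondence is a bijection between $E(v_1)$ and $E(v_2)$. Writing $\beta = \alpha - \{v_2\}$ for the common ``core'' of the edge (the $r-2$ vertices other than $v_1, v_2$), the equation at $v_1$ reads $\lambda x_{v_1}^{r-1} = \sum_\beta x_{v_2} x^\beta = x_{v_2}\sum_\beta x^\beta$, while the equation at $v_2$ reads $\lambda x_{v_2}^{r-1} = x_{v_1}\sum_\beta x^\beta$, where both sums range over the same collection of cores $\beta$.

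First I would handle the degenerate case in which the common sum $\sum_\beta x^\beta$ vanishes, or equivalently in which one of $x_{v_1}, x_{v_2}$ is zero. If $\sum_\beta x^\beta = 0$, then $\lambda x_{v_1}^{r-1} = \lambda x_{v_2}^{r-1} = 0$, and since $\lambda \neq 0$ this forces $x_{v_1} = x_{v_2} = 0$, so the relation $x_{v_1} = \varepsilon x_{v_2}$ holds trivially (any $\varepsilon$ with $\varepsilon^r = 1$ works). Similarly, if say $x_{v_2} = 0$, then the $v_1$-equation gives $\lambda x_{v_1}^{r-1} = 0$, hence $x_{v_1} = 0$ as well. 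So in the remaining case we may assume $x_{v_1}, x_{v_2} \neq 0$ and $\sum_\beta x^\beta \neq 0$.

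In that main case, I would divide the two equations: $\frac{x_{v_1}^{r-1}}{x_{v_2}^{r-1}} = \frac{x_{v_2}}{x_{v_1}}$, which rearranges to $x_{v_1}^{r} = x_{v_2}^{r}$. Setting $\varepsilon = x_{v_1}/x_{v_2}$ (well-defined since $x_{v_2} \neq 0$), this says exactly $\varepsilon^r = 1$, i.e.\ $x_{v_1} = \varepsilon x_{v_2}$ with $\varepsilon$ an $r$-th root of unity, as claimed. I do not expect any serious obstacle here: the only subtlety is bookkeeping the bijection between $E(v_1)$ and $E(v_2)$ and making sure the degenerate subcases are covered so that one never divides by zero. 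If $r = 2$ the ``core'' $\beta$ is empty and $x^\beta = 1$, so the argument still goes through verbatim with $\sum_\beta x^\beta$ equal to the number of common neighbours weighted trivially; nothing special is needed.
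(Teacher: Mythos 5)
Your proof is correct and follows essentially the same route as the paper: both compare the eigenvalue equations at $v_1$ and $v_2$, use that the two vertices lie in identical edges, and conclude $x_{v_1}^r=x_{v_2}^r$. The only cosmetic difference is that the paper multiplies the two equations by $x_{v_1}$ and $x_{v_2}$ respectively, obtaining $\lambda x_{v_1}^r=\sum_{e\ni v_1}x^e=\sum_{e\ni v_2}x^e=\lambda x_{v_2}^r$ in one step and thereby avoiding your (perfectly valid) case analysis for vanishing entries.
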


\begin{proof} Notice that the eigenvalue equations for $v_1$ and $v_2$ are
	$$(\A_{\h}\x)_{v_1}= \sum_{\alpha\in E(v_1)}x^{\alpha}=\lambda x_{v_1}^{r-1}
	\quad\textrm{and}\quad
	(\A_{\h}\x)_{v_2}= \sum_{\alpha\in E(v_2)}x^{\alpha}=\lambda x_{v_2}^{r-1}.$$
	Multiplying the first equation by $x_{v_1}$ and the second by
	$x_{v_2}$ we have
	\[\lambda x_{v_1}^{r}=\sum_{e\in E(\h)| v_1\in
		e}x^{e}=\sum_{e\in E(\h)| v_2\in e}x^{e}=\lambda x_{v_2}^{r}.\]
	
	As $\lambda\neq 0$, we conclude that $x_{v_1}^r=x_{v_2}^r$, so that $x_{v_1}=\varepsilon x_{v_2}$ for some
	$\varepsilon^r=1$.
\end{proof}

We introduce the following notation. A $k$-edge of $\h^k_s$ is denoted $\textbf{e}=S_{v_1} \cup \cdots \cup S_{v_r} \cup S_e$, where
$e=\{v_{1},\ldots,v_{r}\}\in E(\h)$, $S_{v_i}= \{v^i_1,\ldots, v^i_s\}$ for $i \in [r]$  and
$S_e=\{u^1_e,\ldots,u^{k-sr}_e\}$.  Let $(\lambda,\mathbf{x})$ be an eigenpair of  $\h^k_s$. By Lemma~\ref{LemaTwoVertices}, we know that, for all $e \in E(\h)$ and $i \in [k-rs]$, we have $x_{u^i_e}= \omega_{e,i} x_{u^1_e}$, where $\omega_{e,i}^k=1$. Also by Lemma~\ref{LemaTwoVertices}, given $i \in [r]$ and $p \in [s]$, we have $x_{v^i_p}= \varepsilon_{i,p} x_{v^i_1}$, where $\varepsilon_{i,p}^k=1$.

With the notation $\varepsilon_{S_{v_i}}=
\prod_{j=1}^{s}\varepsilon_{i,j}$, $\omega_{S_e}=\prod_{j=1}^{k-rs}\omega_{e,j}$ and
$\epsilon_\textbf{e}=\varepsilon_{S_{v_1}}\cdots
\varepsilon_{S_{v_r}}\omega_{S_e}$, we may write
\begin{eqnarray}
x_{S_{v_i}}&=&x_{v^i_{1}}x_{v^i_{2}}\cdots x_{v^i_{s}}=x_{v^{i}_1}\varepsilon_{i,2}x_{v^{i}_1}\cdots \varepsilon_{i,s}x_{v^{i}_1}=\varepsilon_{S_{v_i}} x_{v^{i}_1}^s \label{eq_aux1}\\
x_{S_e}&=&x_{u^1_{e}}x_{u^2_{e}}\cdots x_{u^{k-rs}_{e}}=x_{u^1_{e}}\omega_{e,2}x_{u^1_{e}}\cdots \omega_{e,k-rs}x_{u^{1}_{e}}=\omega_{S_e} x_{u^1_{e}}^{k-rs} \label{eq_aux2}\\
x^\textbf{e}&=&x_{S_{v_1}}\cdots x_{S_{v_r}}x_{S_e}=\epsilon_\textbf{e} x_{v^{1}_1}^s\cdots x_{v^{r}_1}^sx_{u^1_{e}}^{k-rs}=\epsilon_\textbf{e} (x^e)^sx_{u^1_{e}}^{k-rs}\label{eq_aux3}
\end{eqnarray}

\begin{Lem}\label{LemaAdditionalVertex}
	Let $\h$ be an $r$-graph and $(\lambda,\x)$ be an eigenpair of the generalized power hypergraph $\h^k_s$. If $\lambda \not = 0$, then the entry of $\x$ associated with an additional vertex $u$ satisfies
	$x_{u}^{rs}=\varepsilon \lambda^{-1}(x^e)^s$, where $\varepsilon^k=1$ and
	$e\in E(\h)$ is the $r$-edge that originated the $k$-edge $\textbf{e}$
	containing $u$.
\end{Lem}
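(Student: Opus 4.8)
The plan is to exploit that an additional vertex has degree one, so its eigenvalue equation involves only one $k$-edge, and to combine this with the structural identities \eqref{eq_aux1}--\eqref{eq_aux3}, which are themselves consequences of Lemma~\ref{LemaTwoVertices}. Let $\textbf{e}=S_{v_1}\cup\cdots\cup S_{v_r}\cup S_e$ be the unique $k$-edge of $\h^k_s$ containing the additional vertex $u$, where $e=\{v_1,\dots,v_r\}\in E(\h)$. Since $u$ has degree one, equation \eqref{system} at $u$ reads $x^{\textbf{e}\setminus\{u\}}=\lambda x_u^{k-1}$; multiplying both sides by $x_u$ turns this into the more symmetric identity $x^{\textbf{e}}=\lambda x_u^{k}$.

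Next I would evaluate $x^{\textbf{e}}$ by \eqref{eq_aux3}, obtaining $x^{\textbf{e}}=\epsilon_{\textbf{e}}\,(x^e)^{s}\,x_{u^1_e}^{k-rs}$, and then relate $x_{u^1_e}$ to $x_u$: the additional vertices $u$ and $u^1_e$ of $\textbf{e}$ lie in exactly the same edges, so Lemma~\ref{LemaTwoVertices} gives $x_{u^1_e}=\omega\,x_u$ with $\omega^{k}=1$. Substituting yields
\[\lambda\,x_u^{k}=\epsilon_{\textbf{e}}\,\omega^{k-rs}\,(x^e)^{s}\,x_u^{k-rs},\]
and since $u$ is an additional vertex we have $k>rs$, so $k-rs\ge 1$ and we may factor out $x_u^{k-rs}$ to get $x_u^{k-rs}\bigl(\lambda x_u^{rs}-\varepsilon(x^e)^{s}\bigr)=0$, where $\varepsilon:=\epsilon_{\textbf{e}}\,\omega^{k-rs}$ is a product of $k$th roots of unity and hence satisfies $\varepsilon^{k}=1$. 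When $x_u\neq 0$ this forces $x_u^{rs}=\varepsilon\lambda^{-1}(x^e)^{s}$, which is the desired conclusion.

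The step I expect to be the main obstacle is the case $x_u=0$, in which the identity above becomes vacuous and one must instead prove directly that $(x^e)^{s}=0$, i.e.\ that some $x_{v_i}$ vanishes. When $k=rs+1$ this is immediate, since $\textbf{e}$ then has a single additional vertex, and its eigenvalue equation reads $\bigl(\prod_{i=1}^{r}\varepsilon_{S_{v_i}}\bigr)(x^e)^{s}=0$; as the $\varepsilon_{S_{v_i}}$ are nonzero, $x^e=0$. For $k>rs+1$ the situation is more delicate, because $x_u=0$ forces all additional vertices of $\textbf{e}$ to vanish and thereby cancels the contribution of $\textbf{e}$ in several eigenvalue equations. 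In the application of this lemma in Section~\ref{sec:main}, however, the relevant eigenpair is strictly nonzero, so $x_u\neq 0$ and this difficulty does not occur.
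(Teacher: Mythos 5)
Your argument follows the same route as the paper's: the paper also takes the single eigenvalue equation at the additional vertex $u^1_e$, rewrites its right-hand side via \eqref{eq_aux3} as $\epsilon_{\textbf{e}}(x^e)^s x_{u^1_e}^{k-rs-1}$, and then passes from $\lambda x_{u^1_e}^{k-1}=\epsilon_{\textbf{e}}(x^e)^s x_{u^1_e}^{k-rs-1}$ directly to $x_{u^1_e}^{rs}=\epsilon_{\textbf{e}}\lambda^{-1}(x^e)^s$, extending to the other additional vertices of $\textbf{e}$ by Lemma~\ref{LemaTwoVertices}. That last implication is a division by $x_{u^1_e}^{k-rs-1}$, so the paper's proof establishes exactly the cases you establish (namely $x_u\neq0$, or $k=rs+1$) and silently skips the case you flag. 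Your treatment is in fact the more careful one.

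The obstacle you identify is genuine: for $k>rs+1$ the lemma is false as stated, so the missing case cannot be repaired. A counterexample: let $\h=K_3$ on $\{1,2,3\}$, $r=2$, $s=1$, $k=4$, and let $\lambda=2^{1/4}$. Define $\x$ on $(K_3)^4$ by $x_1=x_2=1$, $x_3=2^{1/4}$, $x_w=1$ for the four additional vertices of the $4$-edges coming from $\{1,3\}$ and $\{2,3\}$, and $x_w=0$ for the two additional vertices of the $4$-edge coming from $e=\{1,2\}$. One checks directly that $(\lambda,\x)$ is an eigenpair of $(K_3)^4$: the $4$-edge over $\{1,2\}$ contributes $0$ to every eigenvalue equation in which it occurs, and what remains is a strictly nonzero eigenpair of $(P_3)^4$. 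For an additional vertex $u$ of that edge, $x_u^{rs}=0$ while $\varepsilon\lambda^{-1}(x^e)^s=\varepsilon\, 2^{-1/4}\neq0$ for every root of unity $\varepsilon$. The correct unconditional statement is precisely your intermediate identity $\lambda x_u^{k}=\varepsilon(x^e)^s x_u^{k-rs}$ with $\varepsilon^k=1$ (equivalently, the lemma under the extra hypothesis $x_u\neq 0$, plus the full statement when $k=rs+1$). As you observe, this is all the paper ever needs: Corollary~\ref{cor_useful} uses only the implication $(x^e)^s=0\Rightarrow x_u=0$, which follows from the intermediate identity; Lemma~\ref{LemaSubgraph_hr} invokes the problematic direction only with $k=rs+1$; and Lemma~\ref{LemaSubgraph_hk} in effect concedes the point by introducing the edge set $A$ of $r$-edges whose main-vertex entries are all nonzero but whose additional-vertex entries vanish --- exactly the configuration realized in the counterexample above.
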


\begin{proof} Let $(\lambda,\x)$ be an eigenpair of the generalized power hypergraph $\h^k_s$ such that $\lambda\neq 0$ and fix an additional vertex $u$. Let $e\in E(\h)$ be the $r$-edge of $\h$ that originated the $k$-edge $\mathbf{e}$ of $\h^k_s$
	containing $u$. First consider the case when $u=u_e^1$. Then we have
	\[\lambda x_{u_e^1}^{k-1} = (\A_{\h^k_s})_{u_e^1} = x^{\textbf{e}-\{u_e^1\}} \stackrel{\eqref{eq_aux3}}{=}\epsilon_\textbf{e} (x^e)^sx_{u_e^1}^{k-rs-1} \quad\Rightarrow \quad \epsilon_\textbf{e}\lambda^{-1}(x^e)^s=x_{u_e^1}^{rs}.\]
	
	This implies that the result holds for $u_e^1$, as $\epsilon_\textbf{e}$ is a $k$-root of unity. For a general additional vertex $u$, the result follows by Lemma~\ref{LemaTwoVertices}, as $x_u$ and $x_{u_e^1}$ may differ only by a $k$th root of unity.
\end{proof}

An immediate but useful consequence of this result will be stated as the following corollary.

\begin{Cor}\label{cor_useful}
	Let $\h$ be an $r$-graph and $(\lambda,\x)$ be an eigenpair of the generalized power hypergraph $\h^k_s$ such that $\lambda \neq 0$. If $x_v=0$ for a main vertex, then $x_u=0$ for any additional vertex $u$ contained in some edge that contains $v$.
\end{Cor}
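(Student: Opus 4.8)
The plan is to read this off directly from Lemma~\ref{LemaAdditionalVertex}. Fix a $k$-edge $\mathbf{e}$ of $\h^k_s$ that contains both the main vertex $v$ and the additional vertex $u$, and let $e=\{v_1,\ldots,v_r\}\in E(\h)$ be the $r$-edge that gave rise to $\mathbf{e}$, so that in the notation fixed above we have $\mathbf{e}=S_{v_1}\cup\cdots\cup S_{v_r}\cup S_e$. Since $v$ is a main vertex lying in $\mathbf{e}$, it is (identified with) one of $v_1,\ldots,v_r$, and therefore the factor $x_v$ occurs in the monomial $x^e=x_{v_1}\cdots x_{v_r}$; hence $x_v=0$ forces $x^e=0$. (If one prefers to read ``main vertex'' loosely, as an arbitrary vertex of $\h_s$ lying in some $S_{v_i}$, one first applies Lemma~\ref{LemaTwoVertices} to deduce that the entry at the main copy of $S_{v_i}$ also vanishes, and then proceeds as before.)

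Now apply Lemma~\ref{LemaAdditionalVertex} to $u$: since $u$ lies in the $k$-edge $\mathbf{e}$ originating from $e$, there is a constant $\varepsilon$ with $\varepsilon^k=1$ such that $x_u^{rs}=\varepsilon\lambda^{-1}(x^e)^s$. As $x^e=0$ and $\lambda\neq 0$, this gives $x_u^{rs}=0$, whence $x_u=0$, which is exactly the claim. Equivalently, one can bypass Lemma~\ref{LemaAdditionalVertex} and argue in one line from \eqref{eq_aux3}: the eigenvalue equation at the additional vertex $u_e^1$ reads $\lambda x_{u_e^1}^{k-1}=x^{\mathbf{e}-\{u_e^1\}}=\epsilon_{\mathbf{e}}(x^e)^sx_{u_e^1}^{k-rs-1}$, so $x^e=0$ yields $x_{u_e^1}=0$, and then Lemma~\ref{LemaTwoVertices} propagates the zero to every additional vertex of $\mathbf{e}$.

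There is essentially no obstacle here; the statement is genuinely a corollary. The only point deserving a moment of care is the bookkeeping about which vertices of $\h^k_s$ are being called ``main'', but under every reasonable reading the argument collapses to the observation that a zero entry at a main vertex of $\mathbf{e}$ kills the monomial $(x^e)^s$, after which Lemma~\ref{LemaAdditionalVertex} (or the single displayed eigenvalue equation above) does the rest.
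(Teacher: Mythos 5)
Your proof is correct and follows exactly the route the paper intends: the paper states this corollary without proof as an ``immediate consequence'' of Lemma~\ref{LemaAdditionalVertex}, and your argument (a zero entry at a main vertex of $\mathbf{e}$ kills $(x^e)^s$, so $x_u^{rs}=\varepsilon\lambda^{-1}(x^e)^s=0$) is precisely that intended deduction. Your parenthetical care about which vertices of $\h^k_s$ count as ``main'' -- invoking Lemma~\ref{LemaTwoVertices} to propagate the zero within $S_{v_i}$ to the representative appearing in $x^e$ -- is a worthwhile clarification rather than a deviation.
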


Another algebraic relation between the spectra of $\h$ and $\h^k_s$ is the following.
\begin{Lem}\label{Lema_h_hks}
Let $\h$ be an $r$-graph. If $(\lambda,\x)$ is a strictly nonzero eigenpair
of $\h^k_s$, then there exists an eigenpair $(\beta,\y)$ of $\h$ such that
$\beta^{rs}=\lambda^{k}$ and $y_v^r=x_v^k$, for each vertex
in $v\in V(\h)$.
\end{Lem}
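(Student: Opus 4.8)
The plan is to take a strictly nonzero eigenpair $(\lambda,\x)$ of $\h^k_s$ and build a candidate eigenpair $(\beta,\y)$ of $\h$ directly. For a main vertex $v \in V(\h)$, set $y_v = x_{v_1^{?}}$ — more precisely, pick the main-vertex copy $v_1$ in $S_v$ and let $y_v$ be a suitable power of $x_{v_1}$ so that the exponent matches $r-1$ on the right-hand side of the eigenvalue system~\eqref{system}. The natural choice is dictated by the relation we must achieve, namely $y_v^r = x_v^k$; thus $\y$ is (a choice of $r$-th root, vertexwise, of) the vector $(x_v^k)_{v\in V(\h)}$, with each $x_v$ denoting the entry of $\x$ at the main-vertex copy in $S_v$. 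Since $(\lambda,\x)$ is strictly nonzero, each $x_v \neq 0$, so these roots are well defined and nonzero; hence $\y$ is automatically a nonzero vector, and if we can verify the eigenvalue equations it will be a genuine eigenvector.

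The key computation is to rewrite the eigenvalue equation of $\h^k_s$ at a main-vertex copy $v_1 \in S_v$ and push it down to $\h$. For such a vertex,
\[
\lambda x_{v_1}^{k-1} = (\Ahk\x)_{v_1} = \sum_{\textbf{e} \ni v_1} x^{\textbf{e} - \{v_1\}}.
\]
Using \eqref{eq_aux1}--\eqref{eq_aux3}, each term $x^{\textbf{e}-\{v_1\}}$ equals (a $k$-th root of unity) times $x_{v_1}^{s-1}\,\bigl(\prod_{j \neq i} x_{v_j}^{s}\bigr)\, x_{u_e^1}^{k-rs}$, where $e = \{v_1,\dots,v_r\}$ is the $r$-edge of $\h$ giving rise to $\textbf{e}$ and $v = v_i$. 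Now invoke Lemma~\ref{LemaAdditionalVertex} to substitute $x_{u_e^1}^{k-rs}$: since $x_{u_e^1}^{rs} = \varepsilon \lambda^{-1}(x^e)^s$, we can replace the additional-vertex factor by an expression in $(x^e)^s$ and a root of unity (here one uses $\gcd$-type reasoning or simply that $k - rs \in \{0,1\}$ is irrelevant — the point is that $x_{u_e^1}^{k-rs}$ is, up to a $k$-th root of unity, a fixed power of $(x^e)^s\lambda^{-1}$). After this substitution every summand becomes a root of unity times a common monomial in the $x_{v_j}$'s and $\lambda^{-1}$. Multiplying the whole equation through by $x_{v_1}$ to symmetrize (mirroring the trick in Lemma~\ref{LemaTwoVertices}) and collecting, one obtains an equation of the shape $\lambda \cdot(\text{const}) \cdot x_v^{?} = \bigl(\sum_{e \ni v} c_e\, (x^e)^{s}\bigr)\cdot \lambda^{-(k-rs)/rs}$ for appropriate roots-of-unity constants $c_e$; translating via $x_v^k = y_v^r$ and $x^e = \prod y_{v_j}^{r/s}$-type identities should collapse it to exactly $\sum_{\alpha \in E_\h(v)} y^\alpha = \beta\, y_v^{r-1}$ for a scalar $\beta$ with $\beta^{rs} = \lambda^k$.

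The main obstacle is bookkeeping the roots of unity so that a single consistent $\beta$ works simultaneously at every vertex, and so that the vertexwise choices of $r$-th roots defining $\y$ can be made coherently. A priori each edge contributes its own $k$-th root of unity $\epsilon_{\textbf e}$, and each main-vertex copy its own $\varepsilon_{i,p}$; one must show these can be absorbed into the (still free) choice of roots $y_v$ and into a global $\beta$ without contradiction. I expect this to be handled by choosing, for each edge, a single square/$s$-th root of $(x^e)^s$ consistently — e.g. defining an auxiliary quantity on edges and using that $\h$ is connected-component-wise rigid enough (or, if not connected, arguing component by component) so that the product of the chosen roots around any edge is forced, exactly as in the proof that strictly nonzero eigenpairs of $G^k$ descend to $G$. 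Once the root-of-unity factors are shown to be consistent, the identity $\beta^{rs} = \lambda^k$ falls out by comparing the top-degree parts, and $y_v^r = x_v^k$ holds by construction; the verification that $(\beta,\y)$ satisfies \eqref{system} at vertices of degree zero is vacuous and at other vertices is the computation sketched above.
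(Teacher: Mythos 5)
Your overall strategy is the same as the paper's: write the eigenvalue equation of $\h^k_s$ at a main vertex $u$, use \eqref{eq_aux1}--\eqref{eq_aux3} to express each summand as an edge-dependent $k$-th root of unity $\epsilon_{\mathbf{e}}$ times $x_u^{s-1}(x^{e-\{u\}})^s x_{u_e^1}^{k-rs}$, eliminate the additional-vertex entry via Lemma~\ref{LemaAdditionalVertex}, and extract roots to read off $\beta=\lambda^{k/(rs)}$ and $y_v=x_v^{k/r}$. However, you stop precisely where the proof actually happens: you name ``bookkeeping the roots of unity so that a single consistent $\beta$ works simultaneously at every vertex'' as the main obstacle and then only express an expectation that it can be handled by per-edge choices of $s$-th roots or by some connectivity/rigidity argument. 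That is not the mechanism, and without it the argument is incomplete.

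The resolution is a short concrete computation: raise each summand to the power $rs$ before substituting. The $rs$-th power of $\epsilon_{\mathbf{e}}x_u^{s-1}(x^{e-\{u\}})^s x_{u_e^1}^{k-rs}$ is $\epsilon_{\mathbf{e}}^{rs}x_u^{rs(s-1)}(x^{e-\{u\}})^{rs^2}(x_{u_e^1}^{rs})^{k-rs}$, and substituting $x_{u_e^1}^{rs}=\epsilon_{\mathbf{e}}\lambda^{-1}(x^e)^s$ makes the total exponent of $\epsilon_{\mathbf{e}}$ equal to $rs+(k-rs)=k$, so the edge-dependent root of unity disappears \emph{identically} because $\epsilon_{\mathbf{e}}^k=1$; no per-edge root choices and no connectivity hypothesis are needed. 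After this cancellation the $rs$-th power of every summand is $\lambda^{-(k-rs)}x_u^{s(k-r)}(x^{e-\{u\}})^{ks}$, and with one fixed, consistent choice of the branches $x_v\mapsto x_v^{k/r}$ (one per vertex, used in every equation) and of $\lambda^{k/(rs)}$, the system becomes $\lambda^{k/(rs)}x_u^{k(r-1)/r}=\sum_{e\ni u}(x^{e-\{u\}})^{k/r}$, which is exactly \eqref{system} for $(\beta,\y)$ with $\beta=\lambda^{k/(rs)}$ and $y_v=x_v^{k/r}$; strict nonzeroness of $\x$ guarantees $\y\neq 0$. Two further slips: your parenthetical ``$k-rs\in\{0,1\}$'' is wrong for this lemma ($k-rs$ is an arbitrary nonnegative integer here), and the claim that $x_{u_e^1}^{k-rs}$ is ``up to a $k$-th root of unity a fixed power of $(x^e)^s\lambda^{-1}$'' only becomes meaningful after the $rs$-th-power manipulation above, since in general $rs$ does not divide $k-rs$. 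The symmetrization by multiplying through by $x_{v_1}$ is harmless but unnecessary.
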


\begin{proof}
Let $u$ be a main vertex in $V(\h^k_s)$. Then

$$\lambda x_u^{k-1} = \sum_{\alpha\in E_{\h^k_s}(u)}x^{\alpha}\stackrel{\eqref{eq_aux3}}{=} \sum_{e\in E(\h)| u\in e} \epsilon_\textbf{e}(x_u)^{s-1}(x^{e-\{u\}})^s(x_{u_{e}^1})^{k-rs}$$

In equation \eqref{Eq_raiz} we will choose the $rs$-th root that preserves equality.
\begin{eqnarray}
\lambda x_u^{k-1}&=& \sum_{e\in E(\h)| u\in e}\left( \left(  \epsilon_\textbf{e}x_u^{s-1}(x^{e-\{u\}})^sx_{u_{e}^1}^{k-rs}\right)^{rs} \right)^{\frac{1}{rs}} \label{Eq_raiz} \\
&=& \sum_{e\in E(\h)| u\in e}\left(\epsilon_\textbf{e}^{rs}x_u^{rs(s-1)}(x^{e-\{u\}})^{rs^2}(x_{u_{e}^1}^{rs})^{k-rs} \right)^{\frac{1}{rs}} \notag\\
&=& \sum_{e\in E(\h)| u\in e}\left(\epsilon_\textbf{e}^{rs}x_u^{rs(s-1)}(x^{e-\{u\}})^{rs^2}(\epsilon_\textbf{e}\lambda^{-1}(x^e)^s)^{k-rs} \right)^{\frac{1}{rs}} 	\notag\\
&=& \sum_ {e\in E(\h)| u\in e}\left( \epsilon_\textbf{e}^k x_u^{s(k-r)}(x^{e-\{u\}})^{ks}\lambda^{-(k-rs)}\right)^{\frac{1}{rs}} 	\notag\\
&=& \left( \lambda^{-(k-rs)}x_u^{s(k-r)}\right)^{\frac{1}{rs}}\sum_ {e\in E(\h)| u\in e}\left( (x^{e-\{u\}})^{ks}\right)^{\frac{1}{rs}} \notag
\end{eqnarray}
This implies that
$$\lambda^\frac{k}{rs} x_u^{\frac{k(r-1)}{r}}\!\!\!\!=\sum_ {e\in E(\h)| u\in e}(x^{e-\{u\}})^\frac{k}{r}.$$

Therefore, $\beta = \lambda^\frac{k}{rs}$ is an eigenvalue of $\h$ with an eigenvector $\y$ given by  $y_v=(x_v)^{\frac{k}{r}}$, for all $v \in V(\h)$.
\end{proof}

Next, we prove three particular cases that will imply Theorem~\ref{Main} in its full generality.
\begin{Lem}\label{LemaSubgraph_hs}
Let $\h$ be an $r$-graph and $s\geq2$ be an integer. The pair $(\lambda,\x)$ is an eigenpair of $\h_s$  if and only if there exists an induced subgraph
$\mathcal{G}$ of $\h$ with no isolated vertices having a strictly nonzero eigenpair $(\beta,\mathbf{y})$, where $\lambda^{rs} = \beta^{rs}$.
\end{Lem}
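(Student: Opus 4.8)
The plan is to prove the two implications separately, using Lemma~\ref{LemEigenpair_hk}, Lemma~\ref{Lema_h_hks}, Theorem~\ref{TeoKang} and Corollary~\ref{Cor_hs} as the main tools. Throughout it is understood that $\lambda\neq 0$, since otherwise the right-hand side cannot hold (a strictly nonzero eigenpair has nonzero eigenvalue). For the forward implication, given an eigenpair $(\lambda,\x)$ of the $rs$-graph $\h_s$ with $\lambda\neq 0$, I would first apply Lemma~\ref{LemEigenpair_hk} to obtain a strictly nonzero eigenpair $(\lambda,\x')$ of $\h_s\triangleleft I$ for some $I\subseteq V(\h_s)$.

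The key step --- and the one I expect to be the main obstacle --- is to recognize $\h_s\triangleleft I$ as itself an $s$-extension: I would show that there is a set $S\subseteq V(\h)$ with $\h_s\triangleleft I=(\h[S])_s$, where $\h[S]$ has no isolated vertices. The reason is that the $s$ vertices of each class $S_v$ lie in exactly the same $k$-edges of $\h_s$, so deleting one of them isolates the other $s-1$; hence, in passing from $\h_s$ to $\h_s\triangleleft I$, each class $S_v$ is either kept in full or deleted in full. Taking $S=\{v\in V(\h):S_v\text{ survives}\}$, one checks that the surviving $k$-edges are exactly those obtained from the edges of $\h[S]$ and that every vertex of $\h[S]$ keeps positive degree, so $\h[S]$ has no isolated vertices automatically. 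Then I would apply Lemma~\ref{Lema_h_hks} with base graph $\mathcal{G}=\h[S]$ and exponent $k=rs$, so that $\mathcal{G}^{rs}_s=\mathcal{G}_s=\h_s\triangleleft I$, to the strictly nonzero eigenpair $(\lambda,\x')$; this produces an eigenpair $(\beta,\y)$ of $\mathcal{G}$ with $\beta^{rs}=\lambda^{rs}$ and $y_v^r=(x'_v)^{rs}$ for all $v\in V(\mathcal{G})$. Since each $x'_v\neq 0$, each $y_v\neq 0$, and $\beta\neq 0$ because $\beta^{rs}=\lambda^{rs}\neq 0$; thus $(\beta,\y)$ is a strictly nonzero eigenpair of the induced subgraph $\mathcal{G}=\h[S]$, as required.

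For the reverse implication, I may first assume that $\h$ has no isolated vertices, since these only contribute isolated classes to $\h_s$ (contributing only the eigenvalue $0$) and never lie in an induced subgraph without isolated vertices, so they influence neither side of the equivalence. Given $\mathcal{G}=\h[S]$ with no isolated vertices and a strictly nonzero eigenpair $(\beta,\y)$ with $\beta^{rs}=\lambda^{rs}$, Theorem~\ref{TeoKang} applied to $\mathcal{G}$ with $k=rs$ gives that every complex solution of $x^{rs}=\beta^{rs}$, and in particular $\lambda$, is an eigenvalue of $\mathcal{G}^{rs}_s=\mathcal{G}_s$. Finally, since $\h[S]$ has no isolated vertices, $\mathcal{G}_s=\h_s\triangleleft I$ with $I$ the set of main vertices of $\h_s$ associated with $V(\h)\setminus S$, and Corollary~\ref{Cor_hs} --- which is exactly where the hypothesis $s\geq 2$ is used --- shows that $\lambda$ is an eigenvalue of $\h_s$. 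Alternatively, one may extend the $\mathcal{G}_s$-eigenvector by zeros and check directly that it is an eigenvector of $\h_s$ for $\lambda$, again using $s\geq 2$ so that each deleted class still contributes a zero factor to every incident $k$-edge.
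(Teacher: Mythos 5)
Your proposal is correct and follows essentially the same route as the paper: reduce to a strictly nonzero eigenpair via Lemma~\ref{LemEigenpair_hk}, identify $\h_s\triangleleft I$ as the $s$-extension of an induced subgraph, apply Lemma~\ref{Lema_h_hks}, and use Theorem~\ref{TeoKang} plus Corollary~\ref{Cor_hs} for the converse. The only (harmless) difference is that you justify ``classes are deleted in full'' structurally from the definition of $\triangleleft$, while the paper derives it from Lemma~\ref{LemaTwoVertices} (entries on copies of a vertex are all zero or all nonzero).
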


\begin{proof} If $(\lambda,\x)$ is a strictly nonzero eigenpair of $\h_s$ then, by Lemma~\ref{Lema_h_hks} there is a strictly nonzero eigenpair
$(\lambda,\mathbf{y})$  of $\h$. Hence, we may suppose $\x$ has some zero entries. Let $I=\{v\in \h_s:x_v=0\}$. By  Lemma~\ref{LemEigenpair_hk}, we know
that there exists a strictly nonzero eigenpair $(\lambda,\mathbf{z})$ of $\h_s\triangleleft I$. By defining $J$ as the set of all main vertices of
$I$, we observe, first, that $\mathcal{G}=\h\triangleleft J$ is an induced subgraph of $\h$, and second, by Lemma~\ref{LemaPowerRemocao}, that $(\h\triangleleft J)_s= \h_s\triangleleft J$.
Lemma~\ref{LemaTwoVertices} implies that entries of $\x$ associated with vertices of $\h_s$ that are copies of the same vertex of $\h$ differ by a root of unity. In particular, they are either all zero or all nonzero. This implies that $\h_s\triangleleft I=\h_s\triangleleft J$.
So, we have  $\h_s\triangleleft I=\h_s\triangleleft J=(\h\triangleleft J)_s$ and, by Lemma~\ref{Lema_h_hks}, we conclude that there is a
strictly nonzero eigenpair $(\beta,\mathbf{y})$ of $\mathcal{G}= \h\triangleleft J$, where $\lambda^{rs} = \beta^{rs}$.

Conversely, if $(\beta,\mathbf{y})$ is a strictly nonzero eigenpair of an induced subgraph $\mathcal{G}$ of $\h$ with no isolated vertices, then we can write
$\mathcal{G}=\h\triangleleft I$ for some $I\subset V(\h)$.  By Theorem~\ref{TeoKang}, any solution of $\lambda^{rs} = \beta^{rs}$ is an eigenvalue of $(\h\triangleleft I)_s=\h_s\triangleleft I$. Corollary~\ref{Cor_hs} implies that $\lambda$ is an eigenvalue of $\h_s$.
\end{proof}

\begin{Lem}\label{LemaSubgraph_hr}
Let $\h$ be an $r$-graph. The pair $(\lambda,\x)$ is an eigenpair of $\h^{r+1}$ if and only if there exists an induced subgraph $\mathcal{G}$ of $\h$ with no isolated vertices having strictly nonzero eigenpair $(\beta,\mathbf{y})$, where $\beta^{r}=\lambda^{r+1}$.
\end{Lem}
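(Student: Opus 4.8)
The plan is to follow the template of the proof of Lemma~\ref{LemaSubgraph_hs}, with the copies of main vertices (present only when $s\ge 2$) replaced by the degree-one additional vertices of $\h^{r+1}=\h^{r+1}_1$. In particular, the role that Lemma~\ref{LemaTwoVertices} plays for copies is now played by Corollary~\ref{cor_useful} (equivalently, Lemma~\ref{LemaAdditionalVertex} with $s=1$): the entry $x_u$ of an additional vertex $u$ of a $k$-edge $\mathbf{e}$, arising from $e\in E(\h)$, vanishes exactly when $x^e=0$, i.e.\ exactly when $e$ contains a main vertex $v$ with $x_v=0$. As elsewhere in this section we treat only nonzero $\lambda$; note a strictly nonzero eigenpair has $\beta\ne 0$, so $\beta^{r}=\lambda^{r+1}$ forces $\lambda\ne 0$ in any case.

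For the forward implication, suppose $(\lambda,\x)$ is an eigenpair of $\h^{r+1}$ with $\lambda\ne 0$. If $\x$ is strictly nonzero, then Lemma~\ref{Lema_h_hks} (applied with $s=1$, $k=r+1$) yields a strictly nonzero eigenpair $(\beta,\y)$ of $\h$ with $\beta^{r}=\lambda^{r+1}$, and $\mathcal{G}=\h$ has no isolated vertices (an isolated vertex would force the corresponding entry of $\y$ to vanish). Otherwise, put $I=\{w\in V(\h^{r+1}):x_w=0\}$ and let $J=I\cap V(\h)$ be its set of main vertices. By Lemma~\ref{LemEigenpair_hk} there is a strictly nonzero eigenpair $(\lambda,\mathbf{z})$ of $\h^{r+1}\triangleleft I$, and in particular $V(\h^{r+1}\triangleleft I)\ne\emptyset$. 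The key step is to establish
\[
\h^{r+1}\triangleleft I \;=\; \h^{r+1}\triangleleft J \;=\; (\h\triangleleft J)^{r+1},
\]
where the second equality is Lemma~\ref{LemaPowerRemocao}. For the first, one uses Corollary~\ref{cor_useful} together with Lemma~\ref{LemaAdditionalVertex} to check that a $k$-edge $\mathbf{e}$ coming from $e\in E(\h)$ meets $I$ if and only if $e$ meets $J$, and that in that case \emph{all} additional vertices of $\mathbf{e}$ lie in $I$; hence the $k$-edges deleted in forming $\h^{r+1}\triangleleft J$ are exactly those deleted in forming $\h^{r+1}\triangleleft I$, and the vertices that become isolated (namely the additional vertices of those $\mathbf{e}$, plus the main vertices all of whose edges meet $J$) coincide as well. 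Since $\mathcal{G}=\h\triangleleft J$ is an induced subgraph of $\h$ with no isolated vertices, applying Lemma~\ref{Lema_h_hks} to the strictly nonzero eigenpair $(\lambda,\mathbf{z})$ of $\mathcal{G}^{r+1}$ produces a strictly nonzero eigenpair $(\beta,\y)$ of $\mathcal{G}$ with $\beta^{r}=\lambda^{r+1}$.

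For the converse, let $(\beta,\y)$ be a strictly nonzero eigenpair of an induced subgraph $\mathcal{G}$ of $\h$ with no isolated vertices, and let $\lambda$ satisfy $\lambda^{r+1}=\beta^{r}$. Because $\mathcal{G}$ has no isolated vertices we may write $\mathcal{G}=\h\triangleleft J$ with $J=V(\h)\setminus V(\mathcal{G})$, and since $\mathcal{G}$ carries a strictly nonzero (hence nonzero) eigenvalue it has at least one edge, so $\mathcal{G}^{r+1}=(\h\triangleleft J)^{r+1}=\h^{r+1}\triangleleft J$ (Lemma~\ref{LemaPowerRemocao}) is nonempty. Theorem~\ref{TeoKang}, applied to the base hypergraph $\mathcal{G}$ with $s=1$ and $k=r+1$, shows that every solution of $x^{r+1}=\beta^{r}$—in particular $\lambda$—is an eigenvalue of $\mathcal{G}^{r+1}=\h^{r+1}\triangleleft J$. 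As $J$ consists only of main vertices of $\h^{r+1}$, Corollary~\ref{Cor_hr} then lifts $\lambda$ to an eigenvalue of $\h^{r+1}$, completing the proof.

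The step I expect to be the main obstacle is the identity $\h^{r+1}\triangleleft I=\h^{r+1}\triangleleft J$: one must carefully bookkeep which additional vertices become isolated under each of the two deletion operations and invoke Corollary~\ref{cor_useful} / Lemma~\ref{LemaAdditionalVertex} to see that the two resulting hypergraphs agree. Everything else is an assembly of results already available—Lemma~\ref{LemEigenpair_hk} to reduce to a strictly nonzero eigenpair, Lemma~\ref{Lema_h_hks} to descend from the expansion to the base graph, and Theorem~\ref{TeoKang} with Corollary~\ref{Cor_hr} to climb back up in the converse.
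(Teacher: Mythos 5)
Your proposal is correct and follows essentially the same route as the paper's own proof: reduce to a strictly nonzero eigenpair via Lemma~\ref{LemEigenpair_hk}, establish $\h^{r+1}\triangleleft I=\h^{r+1}\triangleleft J=(\h\triangleleft J)^{r+1}$ using Lemma~\ref{LemaAdditionalVertex} and Lemma~\ref{LemaPowerRemocao}, descend with Lemma~\ref{Lema_h_hks}, and climb back up via Theorem~\ref{TeoKang} and Corollary~\ref{Cor_hr}. The only difference is that you spell out the bookkeeping of isolated additional vertices and the no-isolated-vertices check more explicitly than the paper does, which is a point in your favor rather than a divergence.
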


\begin{proof} If $(\lambda,\x)$ is a strictly nonzero eigenpair  of $\h^{r+1}$, then, by Lemma~\ref{Lema_h_hks}, we know that there is a strictly
nonzero eigenpair $(\beta,\mathbf{z})$  of $\mathcal{G}=\h$ such that $\beta^{r}=\lambda^{r+1}$. If $\x$ has some zero entries, we define $I=\{v\in\h^{r+1}:x_v=0\}$ and by the proof of Lemma~\ref{LemEigenpair_hk}, we know that there is a strictly nonzero eigenpair $(\lambda,\mathbf{y})$  of $\h^{r+1}\triangleleft I$. Define $J$ as the set of all main vertices in $I$. Note that $\mathcal{G}=\h\triangleleft J$ is an induced subgraph of $\h$. By
Lemma~\ref{LemaPowerRemocao}, we have that $(\h\triangleleft J)^{r+1}= \h^{r+1}\triangleleft J$.

By Lemma~\ref{LemaAdditionalVertex}, an additional vertex $u$ lies in $I$ if and only if there is a main vertex $v$ in the edge containing $u$ that lies in $I$, so that $\h^{r+1}\triangleleft I=\h^{r+1}\triangleleft J$. By Lemma~\ref{Lema_h_hks}, we conclude that there exists a strictly nonzero eigenpair $(\beta,\mathbf{y})$ of $\mathcal{G}=\h\triangleleft J$ such that $\beta^r= \lambda^{r+1}$.

If $(\beta,\mathbf{y})$ is a strictly nonzero eigenpair  of an induced subgraph $\mathcal{G}$ of $\h$ with no isolated vertex, we can write
$\mathcal{G}=\h\triangleleft I$, for some $I\subset V(\h)$. By Theorem~\ref{TeoKang}, each solution of $\lambda^{r+1} = \beta^r$ is an eigenvalue
of $(\h\triangleleft I)^{r+1}=\h^{r+1}\triangleleft I$, where equality comes from Lemma~\ref{LemaPowerRemocao}(c). By Corollary~\ref{Cor_hr}, we deduce that $\lambda$ is an eigenvalue of $\h^{r+1}$, as required.

\end{proof}

\begin{Lem}\label{LemaSubgraph_hk}
Let $\h$ be an $r$-graph and let $k>r+1$. The pair $(\lambda,\x)$ is an eigenpair of $\h^k$ if and only if some subgraph $\mathcal{G}$ of $\h$ has a strictly
nonzero eigenpair $(\beta,\mathbf{y})$, where $\beta^r=\lambda^k$.
\end{Lem}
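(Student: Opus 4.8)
The plan is to mirror the proofs of Lemmas~\ref{LemaSubgraph_hs} and~\ref{LemaSubgraph_hr}, exploiting that $k>r+1$ forces every $k$-edge of $\h^k$ to carry at least two additional (hence degree-one) vertices. As in those lemmas, I would treat the statement as concerning $\lambda\neq0$, since a strictly nonzero eigenpair $(\beta,\y)$ of a subgraph of $\h$ has $\beta\neq0$, and $\beta^r=\lambda^k$ then forces $\lambda\neq0$.

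For the forward implication, let $(\lambda,\x)$ be an eigenpair of $\h^k$ with $\lambda\neq0$. If $\x$ is strictly nonzero, Lemma~\ref{Lema_h_hks} (with $s=1$) immediately yields a strictly nonzero eigenpair $(\beta,\y)$ of $\mathcal{G}=\h$ with $\beta^r=\lambda^k$. Otherwise I would set $I=\{v\in V(\h^k):x_v=0\}$ and let $J\subseteq V(\h)$ be the set of main vertices in $I$; by Lemma~\ref{LemEigenpair_hk} there is a strictly nonzero eigenpair $(\lambda,\mathbf{z})$ of $\h^k\triangleleft I$, and in particular $V(\h^k\triangleleft I)\neq\emptyset$. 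The crucial observation is that $\h^k\triangleleft I=\h^k\triangleleft J$: by Lemma~\ref{LemaAdditionalVertex}, an additional vertex $u$ of a $k$-edge lying over $e\in E(\h)$ satisfies $x_u^r=\varepsilon\lambda^{-1}x^e$, so (since $\lambda\neq0$) $x_u=0$ exactly when $x^e=0$, i.e.\ exactly when some main vertex of $e$ lies in $J$; hence $\triangleleft I$ and $\triangleleft J$ delete the same $k$-edges, and any vertex in $I\setminus J$ is an additional vertex of a deleted $k$-edge, so it is removed as an isolated vertex anyway. By Lemma~\ref{LemaPowerRemocao}(c), $\h^k\triangleleft J=(\h\triangleleft J)^k$, and $\h\triangleleft J$ is an induced subgraph of $\h$, in particular a subgraph. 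Applying Lemma~\ref{Lema_h_hks} to $(\lambda,\mathbf{z})$, viewed as a strictly nonzero eigenpair of $(\h\triangleleft J)^k$, yields a strictly nonzero eigenpair $(\beta,\y)$ of $\mathcal{G}=\h\triangleleft J$ with $\beta^r=\lambda^k$.

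For the converse, suppose $\mathcal{G}$ is a subgraph of $\h$ with a strictly nonzero eigenpair $(\beta,\y)$ and $\beta^r=\lambda^k$; then $\beta\neq0$, and $\mathcal{G}$ has no isolated vertex, for otherwise the eigenvalue equation at such a vertex $v$ would read $0=\beta y_v^{r-1}$. Writing $A=E(\h)\setminus E(\mathcal{G})$, the absence of isolated vertices gives $\mathcal{G}=\h-A$, so $\mathcal{G}^k=\h^k-A^k$ by Lemma~\ref{LemaPowerRemocao}(b). By Theorem~\ref{TeoKang}, $\lambda$---a solution of $x^k=\beta^r$---is an eigenvalue of $\mathcal{G}^k$. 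Now I would pick one additional vertex from each $k$-edge of $A^k$ to form a set $I\subset V(\h^k)$; since each such vertex lies in exactly one $k$-edge, a direct check gives $\h^k\triangleleft I=\h^k-A^k=\mathcal{G}^k$. This set is nonempty (as $\mathcal{G}$ has at least one edge, $\beta$ being nonzero), and since $k>r+1$ every $k$-edge of $\h^k$ has at least two vertices of degree one, so Corollary~\ref{Cor_hk} applies and shows $\lambda$ to be an eigenvalue of $\h^k$.

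The arguments are routine given the results of Section~\ref{sec:pre}; the step that deserves the most care is verifying the bookkeeping identities $\h^k\triangleleft I=\h^k\triangleleft J$ in the forward direction and $\h^k\triangleleft I=\h^k-A^k$ in the converse, that is, tracking exactly which additional vertices are zero and which vertices become isolated under deletion. This bookkeeping is also where the hypothesis $k>r+1$ is used: it is precisely the guarantee that every edge contains two vertices of degree one, which is needed to invoke Corollary~\ref{Cor_hk} (equivalently Theorem~\ref{TeoEdgeRemocao}) on the converse side, and is what lets $\mathcal{G}$ range over all subgraphs of $\h$, not just the induced ones.
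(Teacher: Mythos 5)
Your converse direction is sound and essentially follows the paper's route: write $\mathcal{G}=\h-A$, note $\mathcal{G}^k=\h^k-A^k=\h^k\triangleleft I$ for a suitable set $I$ of additional vertices, apply Theorem~\ref{TeoKang} and then Corollary~\ref{Cor_hk}. The problem is in the forward direction: the ``crucial observation'' $\h^k\triangleleft I=\h^k\triangleleft J$ is false when $k>r+1$, and with it the claim that $x_u=0$ for an additional vertex exactly when $x^e=0$. You are reading Lemma~\ref{LemaAdditionalVertex} as an unconditional biconditional, but its proof divides the relation $\lambda x_{u_e^1}^{k-1}=\epsilon_{\mathbf{e}}(x^e)^s x_{u_e^1}^{k-rs-1}$ by $x_{u_e^1}^{k-rs-1}$, which is legitimate only when $x_{u_e^1}\neq 0$ or $k=rs+1$. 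For $k>r+1$ each $k$-edge carries at least two additional vertices, and setting all of them to zero annihilates the contribution of that edge to the eigenvalue equation at every one of its vertices, regardless of the values at its main vertices. Hence $I$ may contain additional vertices of edges none of whose main vertices lies in $J$; such $k$-edges are deleted by $\triangleleft I$ but not by $\triangleleft J$, and the strictly nonzero eigenpair $(\lambda,\mathbf{z})$ of $\h^k\triangleleft I$ is not an eigenpair of $(\h\triangleleft J)^k$, so Lemma~\ref{Lema_h_hks} cannot be applied there.

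A concrete counterexample to your identity: let $\h=C_4$ and $k=4$, take the strictly nonzero eigenpair of $(P_4)^4=(C_4)^4-\{\mathbf{e}\}$ with $\lambda^4=\nu^2$, $\nu=\frac{1+\sqrt{5}}{2}$, supplied by Theorem~\ref{TeoKang}, and extend it by zeros on the two additional vertices of the removed $4$-edge $\mathbf{e}$. By Theorem~\ref{TeoEdgeRemocao} this is an eigenpair of $(C_4)^4$ with $J=\emptyset$ but $I\neq\emptyset$; your argument would then conclude that $C_4$ itself has an eigenvalue $\beta$ with $\beta^2=\nu^2$, which is false since $\pm 2$ are its only nonzero eigenvalues. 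This phenomenon is precisely why part (b) of Theorem~\ref{Main} quantifies over all subgraphs rather than only induced ones, and why your own closing remark that $k>r+1$ ``lets $\mathcal{G}$ range over all subgraphs'' must already be visible in the forward direction. The paper repairs the gap by introducing the set $A$ of $r$-edges whose main entries are all nonzero but whose additional entries vanish, proving $\h^k\triangleleft I=(\h^k\triangleleft J)-A^k=((\h\triangleleft J)-A)^k$, and applying Lemma~\ref{Lema_h_hks} to the generally non-induced subgraph $(\h\triangleleft J)-A$.
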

\begin{proof} If $(\lambda,\x)$ is a strictly nonzero eigenpair  of $\h^k$, then, by Lemma~\ref{Lema_h_hks}, we know that there is a strictly nonzero
eigenpair $(\beta,\mathbf{y})$ of $\mathcal{G}=\h$. If $\x$ has some zero coordinates, we define  $I=\{v\in V(\h^k):x_v=0\}$, and  by
the proof of Lemma~\ref{LemEigenpair_hk}, there exists a strictly nonzero eigenpair $(\lambda,\mathbf{z})$ of  $\h^k\triangleleft I$. Define $J$ as the
set of all the main vertices in $I$. Let $A$ be the set of edges $e$ of $\h$ such that the entries of $\x$ corresponding to elements of $e$ are nonzero, but such that an entry associated with some additional vertex in $S_e$ is zero. By Lemma~\ref{LemaTwoVertices}, this means that that the entries associated with all elements of $S_e$ are zero. Note that $\h^k\triangleleft I=(\h^k\triangleleft J) - A^k$. By Lemma~\ref{LemaPowerRemocao} we have that
\[\h^k\triangleleft I=(\h^k\triangleleft J) - A^k=(\h\triangleleft J)^k - A^k=((\h\triangleleft J) - A)^k\]
Thus, by Lemma~\ref{Lema_h_hks}, we know that there is an eigenpair $(\beta,\mathbf{y})$ of $\mathcal{G}=(\h\triangleleft J) - A$ such that $\beta^r=\lambda^{k}$.

Let $(\beta,\mathbf{y})$ be a strictly nonzero eigenpair of a subgraph $\mathcal{G}$ of $\h$. Consider a set of vertices $I$ and a set of edges $A$ whose elements do not contain vertices of $I$ such that $\mathcal{G}=(\h \triangleleft I)-A$. By Lemma~\ref{LemaPowerRemocao}, we have that
\[(\mathcal{G})^k=((\h\triangleleft I)-A)^k=(\h\triangleleft I)^k-A^k=(\h^k\triangleleft I)-A^k\]
Let $J \subset V(\h^k)$ given by the union of the set $I$ with the additional vertices of the $k$-edges in $A^k$. The definition of $J$ implies that $\h^k\triangleleft J=(\h^k\triangleleft I)-A^k$. By Theorem~\ref{TeoKang}, each solution of $\lambda^k=\beta^r$ is an eigenvalue of $\h^k\triangleleft J$. Corollary~\ref{Cor_hk} implies that $\lambda$ is an eigenvalue of $\h^k$.
\end{proof}

To conclude the paper, we show how the above results imply Theorem~\ref{Main}.
\begin{proof}[Proof of Theorem~\ref{Main}]
Let $\h$ be an $r$-graph, fix integers $s \geq 1$ and $k \geq rs$ and consider a pair $(\lambda,\x) \in \mathbb{C} \times \mathbb{C}^{n}$ such that $\lambda \neq 0$ and $n=(k-rs) \cdot e(\h)+s \cdot v(\h)$.

The statements of Lemmas~\ref{LemaSubgraph_hs},~\ref{LemaSubgraph_hr} and~\ref{LemaSubgraph_hk} refer only to strictly
nonzero eigenpairs of subgraphs $\mathcal{G}$ of the base hypergraph $\h$, while Theorem~\ref{Main} refers to all nonzero eigenvalues. Because of Lemma~\ref{LemEigenpair_hk}, we know that $(\lambda,\x)$ is an eigenpair of an $r$-graph $\mathcal{G}$ if and only if there is a subgraph $\mathcal{G}'$ of $\mathcal{G}$ such that $(\lambda,\y)$ is a strictly nonzero eigenpair of $\mathcal{G}'$. In particular, in the proof of Theorem~\ref{Main}, we may restrict our attention to strictly nonzero eigenpairs of subgraphs (or induced subgraphs) of $\h$.

We start with part (a). If $s\geq 2$ and $k=rs$, the result is just Lemma~\ref{LemaSubgraph_hs}. Assume that $k=rs+1$. If $s=1$, the theorem is just Lemma~\ref{LemaSubgraph_hr}, so suppose that $s \geq 2$.  Let $(\lambda,\x)$ be an eigenpair of $\h^{k}_s$. By Lemma~\ref{LemaSubgraph_hr}, there exists an induced subgraph $\mathcal{G}^\ast$ of $\h_s$ with no isolated vertices having strictly nonzero eigenpair $(\beta,\y)$, where $\beta^{rs}=\lambda^{rs+1}$. In fact, the proof of Lemma~\ref{LemaSubgraph_hr} implies that $\mathcal{G}^\ast$ is equal to $\mathcal{G}_{s}$ for some induced subgraph $\mathcal{G}$ of $\h$ (this is a consequence of Corollary~\ref{cor_useful}). The desired result is obtained by an application of Lemma~\ref{LemaSubgraph_hs} to $\mathcal{G}$  and $\mathcal{G}_s$.
Conversely, assume that $(\beta,\y)$ is a strictly nonzero eigenpair of and induced subgraph $\mathcal{G}$ of $\h$ with no isolated vertices, where $\beta^{rs}=\lambda^k$. By Lemma~\ref{LemaSubgraph_hs}, the quantity $\beta$ is an eigenvalue of $\h_s$. We may now apply Theorem~\ref{TeoKang} for $r'=rs$, $k'=r'+1$ and $\h'=\h_s$ to conclude that the solutions to the equation $\lambda^{rs+1}=\beta^{rs}$ are eigenvalues of $\h^k_s$, leading to the desired result.

Part (b) may be obtained analogously replacing Lemma~\ref{LemaSubgraph_hr} by Lemma~\ref{LemaSubgraph_hk}.
\end{proof}

To conclude our paper, we provide examples that shed light on aspects of our main result. First, we use Theorem~\ref{Main} to compute all nonzero eigenvalues of a power hypergraph. Let $k\geq 3$ and suppose that we wish to find the nonzero eigenvalues of $(S_n)^k$, where $S_n$ denotes the star with $n$ vertices (i.e., the complete bipartite graph $K_{1,n-1}$). It is well-known that the only nonzero eigenvalues of any star $S_m$ are $\sqrt{m-1}$ and $-\sqrt{m-1}$. Moreover, in the case of stars, a subgraph (or induced subgraph) with no isolated vertex is always a star $S_m$ with $2 \leq m \leq n$. By Theorem~\ref{Main} the nonzero eigenvalues of $(S_n)^k$ are the complex roots of the equations $x^k=p$, where $p \in \{1,2,\ldots,n-1\}$.

Next, we illustrate the difference between parts (a) and (b) of
Theorem~\ref{Main}. Consider the graph $\h=C_4$, so that $\mathcal{G}=P_4$ is
a subgraph of $\h$ with no isolated vertices. It is well-known that $\nu =
\frac{1+\sqrt{5}}{2}$ is an eigenvalue of $P_4$. This means that the complex
roots of the equation $x^k=\nu^2$ are nonzero eigenvalues of $\h^k$ for all
$k \geq 4$. As it turns out, the roots of $x^3=\nu^2$ are not eigenvalues of
$\h^3$, but this does not contradict Theorem~\ref{Main} because $P_4$ is not
an induced subgraph of $C_4$.

\section*{Acknowledgments} This work is part of
doctoral studies of K. Cardoso under the supervision of V.~Trevisan. K.
Cardoso is grateful for the support given by Instituto Federal do Rio Grande
do Sul (IFRS), Campus Feliz. C. Hoppen thanks CNPq (Proj. 308539/2015-0) for
their support. V. Trevisan acknowledges partial support of CNPq grants
409746/2016-9 and 303334/2016-9, CAPES (Proj. MATHAMSUD 18-MATH-01) and
FAPERGS (Proj.\ PqG 17/2551-0001).

\vspace{-0.01cm}

\end{document}